\documentclass[12pt,a4paper]{article}
\frenchspacing
\usepackage{amsthm}
\usepackage{epsfig}
\usepackage{alltt}
\usepackage[latin1]{inputenc}
\usepackage{makeidx}
\usepackage{newlfont}
\usepackage{amsmath}
\usepackage{amssymb}
\usepackage{amsfonts}
\usepackage{amscd}
\usepackage{enumerate}
\usepackage{mathrsfs}
%\usepackage{bibgerm}
%Includes conditional text
\usepackage{ifthen}
%\setcounter{theorem}{1}

%%%%%%%%%%%%%%%%%%%%%%%%%%%%%%%%%%%%%%%%%%%%%%%%%%%%%%%%%%%%%%%%%%%%%%%%%%%%%%%%%%%%%%%%%%%%%%%%
% Sätze und ähnliches

\newtheorem{theorem} {\bf Theorem} [section]

\newtheorem{prop}[theorem]{\bf Proposition}
\newtheorem{corollary}[theorem]{\bf Corollary}

\theoremstyle{remark}

%%%%%%%%%%%%%%%%%%%%%%%%%%%%%%%%%%%%%%%%%%%%%%%%%%%%%
% Abk"urzungen f"ur Pfeile

%%%%%%%%%%%%%%%%%%%%%%%%%%%%%%%%%%%%%%%%%%%%%%%%%%%%%%%%%%%%
% Abk"urzungen f"ur Griechische Buchstaben

\def\phi{\varphi}
\def\epsilon{\varepsilon}
\def\theta{\vartheta}

%%%%%%%%%%%%%%%%%%%%%%%%%%%%%%%%%%%%%%%%%%%%%%%%%%%%%%%%%%%%%%%%
% Anderes

\newcommand{\mcm}{{\mathscr{M}}}

\newcommand{\mcn}{{\mathscr{N}}}

%%%%%%%%%%%%%%%%%%%%%%%%%%%%%%%%%%%%%%%%%%%%%%%%%%%%%%%%%%%%%%%%%
% Operatornamen

\newcommand{\graph}{{\mathrm{graph}}}

\newcommand{\lspan}{\operatorname{\mathrm span}}

%%%%%%%%%%%%%%%%%%%%%%%%%%%%%%%%%%%%%%%%%%%%%%%%%%%%%%%%%%%%%%
% GMT-Makros

\newcommand{\loc}{\mathrm{loc}}

%%%%%%%%%%%%%%%%%%%%%%%%%%%%%%%%%%%%%%%%%%%%%%%%%%%%%%%%%%%%%%
% Blackboardboldface
\newcommand{\Bl}[1]{{\mathbb{#1}}}

\newcommand{\DR}{\Bl{R}}
\newcommand{\DZ}{\Bl{Z}}

\newcommand{\DN}{\Bl{N}}

%%%%%%%%%%%%%%%%%%%%%%%%%%%%%%%%%%%%%%%%%%%%%%%%%%%%%%%%%%%%%%%%%%%%%%%%%
%Kommentare an den Rand und irgendwoanders hin
% Create a switch
\newboolean{comment}
% A comment line
\newcommand{\comline}[1]{\ifthenelse{\boolean{comment}}{{\bf
      \noindent\shortstack[r]{\rule{0cm}{0.5cm}\\
      #1\\\rule{16cm}{0.02\pagestyle{myheadings}
 \markboth{Links}{Rechts}
 5cm}}\\}}}

% A comment within the text
\newcommand{\comtxt}[1]{\ifthenelse{\boolean{comment}}{{\bf #1} \\}}

% A comment in the page margin
\newcommand{\commar}[1]{\ifthenelse{\boolean{comment}}{\marginpar{{#1}}}{}}

%----------------------- Parameter setzen
\setboolean{comment}{true}          %Kommentare JA
%\setboolean{comment}{false}         %Kommentare NEIN

%%%%%%%%%%%%%%%%%%%%%%%%%%%%%%%%%%%%%%%%%%%%%%%%%%%%%%%%%%%%%%%%%%%%%%%%%
%Für Bibtex
\newcommand{\btxvolumelong }[1]{\relax}

\newcommand{\btxofserieslong }[1]{\relax}

\newcommand{\btxandlong }[1]{and}

%%%%%%%%%%%%%%%%%%%%%%%%%%%%%%%%%%%%%%%%%%%%%%%%%%%%%%%%%%%%%%
%%% Local Variables: 
%%% mode: latex
%%% TeX-master: "diss"
%%% End: 

\begin{document}

\author{Hannes Junginger-Gestrich \&
Enrico Valdinoci\thanks{Addresses:
HJG,
{\sf
Mathematisches Institut,
Albert-Ludwigs-Universit\"at Freiburg,
Abteilung f\"ur Reine Mathematik,
Eckerstra{\ss}e 1,
79104 Freiburg im Breisgau (Germany)},
{\tt
hannes.junginger-gestrich@math.uni-freiburg.de},
EV,
{\sf Di\-par\-ti\-men\-to di Ma\-te\-ma\-ti\-ca,
Uni\-ver\-si\-t\`a di Ro\-ma Tor Ver\-ga\-ta,
Via della Ri\-cer\-ca
Scien\-ti\-fi\-ca~1, 00133~Roma (Italy)},
{\tt enrico@mat.uniroma3.it}.
The work of EV was
supported by MIUR Variational
Methods and
Nonlinear Differential Equations.
Diese Zusammenarbeit wurde bei einem sehr angenehmen Besuch von EV
in Freiburg
begonnen.}}

\title{Some connections
between results and
problems of
De Giorgi, Moser and Bangert}

\maketitle

\begin{abstract}
  Using theorems of Bangert, we prove a rigidity result which shows how a
  question raised by Bangert for elliptic integrands of
Moser type is connected, in 
the case of minimal solutions
  without self-intersections, to a famous conjecture of De Giorgi for phase
  transitions.
\end{abstract}

\section{Introduction}

The purpose of this note is to relate some probelms posed by Moser \cite{Mos},
Bangert \cite{Ba89} and De Giorgi \cite{DG}.  In particular, we point out that a
rigidity result in a question raised by Bangert for the case of minimal
solutions of elliptic
integrands would imply a one-dimensional symmetry for minimal phase transitions
connected to a famous conjecture of De Giorgi.

Though the proofs we present here are mainly a straightening of the existing
literature, we hope that our approach may clarify some points in these important
problems and provide useful connections.

\subsection{The De Giorgi setting}\label{DG:set}
 
A classical phase transition model (known in the literature
under the names of Allen, Cahn, Ginzburg, Landau, van der Vaals, etc.)
consists in the study of the elliptic equation
\begin{equation}\label{eq1}
\Delta u = u-3u^2 +2u^3\,,
\end{equation}
where $u\in C^2(\DR^n)$.  Particular solutions of \eqref{eq1} are the local
minimizers of the associated energy functional. Namely, we define $W(u):=u^2
(1-u)^2$ and we say that $u\in C^2(\DR^n,(0,1))$ is a minimal solution of
\eqref{eq1} if
\begin{equation}\label{eq2}
\int_B |u_x|^2+W(u)\,dx\leq\int_B |u_x+\phi_x|^2+W(u+\phi)\,dx
\end{equation}
for any $\phi\in C^\infty_0(B)$ and any ball $B\subset \DR^n$.

Following is a celebrated question\footnote{To make the
notation of this note uniform, we allow ourselves to slighlty change
the notation of \cite{DG}: namely, what here is $u$, there is
$2u-1$, so that the range of $u$, which is here $(0,1)$, corresponds
to $(-1,1)$ there.} 
posed in 
\cite{DG}:

{\bf Problem \cite{DG}:}
{\em Let $u \in C^2(\DR^n)$ be a solution of
\eqref{eq1} in the whole $\DR^n$. Suppose that $0<u(x)<1$ and
$\partial_n u(x)>0$ for any $x\in\DR^n$.

Is it true that all the level sets of $u$ are hyperplanes, at least if $n\leq
8$?}

To the best of our knowledge, this problem is still open in its generality, and
a complete answer is known only if $n=2$ \cite{BCN, GG} and $n=3$ \cite{AC}.  In
these cases, indeed, the answer to the above question is positive in a much more
general setting \cite{AAC} -- in particular, no structural assumptions are
needed for the nonlinearity on the right hand side of \eqref{eq1}.  When $4\leq
n\leq 8$, the conjecture has been proven \cite{S} under the additional
assumptions that
$$ \lim_{x_n\rightarrow-\infty}u(x',x_n) =0
\;{\mbox{ and }}\;
\lim_{x_n\rightarrow+\infty}u(x',x_n) =1\,.$$
If the above limits are uniform, the conjecture holds
in any dimension $n$ \cite{Fa, BHM, BBG}.

The problem has also been dealt with
for $p$-Laplacian-type operators
\cite{Fap, DaG, VSS}, in the {H}eisenberg group framework
\cite{BL} and for free boundary models
\cite{VMZ}.

A natural question arising from \cite{DG} is whether analogous statements hold
for minimal solutions. We state this question in the following form: \medskip

{\bf Problem \cite{DG}$_{MIN}$:} {\em Let $u \in C^2(\DR^n,(0,1))$ be a
minimal solution of \eqref{eq1}.
Is it true that all the level
  sets of $u$ are hyperplanes, at least if $n$ is small enough?}
\medskip

The answer to the above question is known to be positive for $n\le 7$ \cite{S}.
We will see that
Problem {\bf \cite{DG}$_{MIN}$} has some relation with another
one, posed by \cite{Ba89} for minimizers without self-intersections in the
periodic elliptic integrand context.

\subsection{The Moser-Bangert setting}\label{Ba:set}

Given $F:\DR^n\times\DR\times\DR^n\to \DR$, which is
$\DZ$-periodic in the first $n+1$ variables,
one studies functions $u:\DR^n \to
\DR$ that minimize the integral
$\int F(x,u,u_x)\,dx$ with respect to compactly supported
variations, that is
\begin{equation}
  \label{eq:mos_min}
  \int_B F(x,u,u_x)\,dx
  \leq
  \int_B F(x,u+\phi,u_x+\phi_x)\,dx
  \,,  
\end{equation}
for any $\phi\in C^\infty_0(B)$ and for any ball $B\subset\DR^n$.

We assume $F \in C^{2,\epsilon}(\DR^{2n+1})$, with some 
$\epsilon\in(0,1]$. We also suppose that
$F=F(x,u,p)$ satisfies the following
appropriate growth conditions (compare with \cite[(3.1)]{Mos}):
\begin{equation}\label{Ba88et}
\begin{split}
&\frac 1 c\,
|\xi|^2\leq \sum_{1\leq i,j\leq n}F_{p_i p_j}(x,u,p) \xi_i \xi_j
\leq c\,|\xi|^2\,\\
&|F_{pu}|+|F_{px}|\leq c(1+|p|)\,,\\
&|F_{uu}|+|F_{ux}|+
|F_{xx}|\leq c(1+|p|^2)\,,
\end{split}
\end{equation} 
for any $(x,u,p)\in\DR^n\times\DR\times\DR^n$ and any $\xi\in\DR^n$,
for a suitable $c\geq 1$.

The above assumptions ensure the ellipticity of the corresponding Euler-Lagrange
equation.  Under these conditions, the minimizers inherit regularity from $F$
and they are of class $C^{2,\epsilon}(\DR^n)$ (see \cite[page 246]{Mos} for
further details).
  
If $u:\DR^{n+1} \to \DR$ and $\bar k = (k,k_{n+1}) \in \DZ^{n+1}$, we
define\footnote{We will often adopt the notation of writing barred vectors for
  elements of $(n+1)$-dimensional spaces: e.g., $k\in\DZ^n$ versus $\bar
  k\in\DZ^{n+1}$.}  $T_{\bar k}u:\DR^n\to \DR$ as
$$T_{\bar k}u(x) = u(x-k)+ k_{n+1}\,.$$
Since $F$ is $\DZ^{n+1}$-periodic, $T$ determines a $\DZ^{n+1}$-action on the
set of minimizers.

We will consider the partial ordering on the set of functions for which we say
that $u < v $ if and only if $u(x) < v(x)$ for all $x \in \DR^n$.  We then look
at minimizers {\em without self-intersections}, i.e. minimizers whose $T$-orbit
is totally ordered with respect to the above partial ordering. More explicitly,
we say that a minimizer $u$ is without self-intersections if $T_{\bar k} u$ is either
$>$, $<$ or $=$ $u$. It is readily seen that a minimizer $u$ is without
self-intersections if and only if the hypersurfaces $\graph (u) \subset
\DR^{n+1}$ have no self-intersections when projected into the standard torus
$\DR^{n+1}/\DZ^{n+1}$ (and this property justifies the name given to it).

One denotes the set of minimizers without self-intersections by $\mcm$.  For
every $u \in \mcm$, \cite[Theorem 2.1]{Mos} shows that $\graph(u)$ lies within
universally bounded Hausdorff distance from a hyperplane: more explicitly, there
exists $C\ge 0$ such that for every $u \in \mcm$ there exists $\rho\in\DR^n$
with
\begin{equation}\label{AAJ}
|u(x)-u(0)-\rho\cdot x|\leq C
\end{equation}
for any $x\in\DR^n$.

We set $\bar a_1(u)=\bar a_1:=(-\rho,1)/\sqrt{|\rho|^2+1}\in\DR^{n+1}$.
Geometrically, $\bar a_1(u)$ is the unit normal to the above mentioned
hyperplane which has positive inner product with the $(n+1)$st standard
coordinate vector.  We recall that $\bar a_1(u)$ is sometimes called {\em
  rotation vector} or {\em average slope} of $u$ (the names are borrowed by
analogous features in dynamical systems, see, e.g., \cite{Mather}).

We now briefly recall some useful {\em invariants} introduced by \cite{Ba89}.
To this extent we remark that if $\bar k\in\DZ^{n+1}$ and $\bar k \cdot \bar
a_1$ is $>0$ ($<0$, respectively), then $T_{\bar k}u> u$ ($<u$, respectively).
To see this, take $\bar k\in\DZ^{n+1}$ with $\bar k \cdot \bar a_1>0$ and
suppose, by contradiction, that $T_{\bar k}u {\not>} u$. Then, since $u$ is
non-self-intersecting, $T_{\bar k}u \le u$ and so $u(x-\ell k)+\ell k_{n+1}\le
u(x)$, for any $\ell\in\DN$.  We thus have
$$0\leq u(\ell k)-u(0)
-\ell k_{n+1}\leq C-\ell (k_{n+1}-k\cdot \rho)\,,$$ thanks to \eqref{AAJ}.  By
taking $\ell$ large, since $k_{n+1}>k\cdot\rho$, one reaches the contradiction
that proves the above observation.

If, on the other hand, $\bar k \cdot \bar a_1 = 0$, it is possible that $T_{\bar
  k}u > u$ or $<u$ or $=u$. Bangert gives a complete description of such
possibilities in \cite[(3.3)-(3.7)]{Ba89}. We subsume this classification as
follows:

\begin{prop}\label{graphbasic}
  For every $u \in \mcm$ there exists an integer $t = t(u) \in \{1,\ldots,n+1\}$
  and unit vectors $\bar a_1= \bar a_1(u),\ldots,\bar a_t = \bar a_t(u)$ such
  that for $1\le s \le t$ we have
  \begin{align}\label{adm}
    \begin{split}
      \bar a_s \in \lspan \bar \Gamma_s\,, \quad \mbox{where } \bar \Gamma_s =
      \bar \Gamma_s(u) := \DZ^{n+1} \cap \big(\lspan\{\bar a_1,\ldots, \bar
      a_{s-1}\}\big)^\bot\,,
    \end{split}
  \end{align} and the $\bar a_1,\ldots,\bar a_t$ are uniquely determined by the
  following properties:
  \begin{enumerate}[{\em (i)}]
  \item $T_{\bar k} u > u$ if and only if there exists $1 \le s \le t$ such that
    $\bar k \in \bar \Gamma_s$ and $\bar k \cdot \bar a_s > 0$.
  \item $T_{\bar k}u = u$ if and only if $\bar k \in \bar \Gamma_{t+1}$.
  \end{enumerate}
\end{prop}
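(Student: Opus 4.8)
The plan is to build the vectors $\bar a_1,\ldots,\bar a_t$ inductively, peeling off one normal direction at a time, exactly in the spirit of Bangert's original argument in \cite[\S3]{Ba89}. We already have $\bar a_1 = \bar a_1(u)$ coming from the asymptotic hyperplane \eqref{AAJ}, and the observation preceding the statement shows that for $\bar k \in \DZ^{n+1} = \bar\Gamma_1$ the sign of $\bar k\cdot\bar a_1$ controls whether $T_{\bar k}u$ is $>$, $<$, or (possibly, when $\bar k\cdot\bar a_1=0$) $=u$ or comparable in either direction. So the first step is to analyze the sublattice $\bar\Gamma_2 = \DZ^{n+1}\cap\bar a_1^\perp$ and the restriction of the $T$-action to it. The key structural input is that $\{\bar k\in\bar\Gamma_2 : T_{\bar k}u = u\}$ is a subgroup of $\bar\Gamma_2$ (immediate, since $T$ is an action and the relation $T_{\bar k}u=u$ is closed under the group law and inverses), and that on the quotient the remaining $\bar k$ with $T_{\bar k}u>u$ form a ``positive cone'' — that is, this set is closed under addition and its negative is the set with $T_{\bar k}u<u$. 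Granting that the comparability itself holds on $\bar\Gamma_2$ (which is where $u\in\mcm$ is used: the $T$-orbit is totally ordered), a positive cone in a finitely generated abelian group that is additively closed and satisfies trichotomy must be a half-space cut out by a linear functional, i.e.\ there is a vector $\bar a_2$, unique up to positive scaling, with $T_{\bar k}u>u \iff \bar k\cdot\bar a_2>0$ for $\bar k\in\bar\Gamma_2$; normalizing gives the unit vector $\bar a_2(u)$, and by construction $\bar a_2\in\lspan\bar\Gamma_2$, which is \eqref{adm} for $s=2$.

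Next I would iterate: having produced $\bar a_1,\ldots,\bar a_{s-1}$ and the lattice $\bar\Gamma_s = \DZ^{n+1}\cap(\lspan\{\bar a_1,\ldots,\bar a_{s-1}\})^\perp$, I examine the $T$-action restricted to $\bar\Gamma_s$. For $\bar k\in\bar\Gamma_s$ we have $\bar k\cdot\bar a_j=0$ for all $j<s$ (since $\bar a_j\in\lspan\bar\Gamma_j\subset\lspan\{\bar a_1,\ldots,\bar a_{j-1}\}^\perp{}^\perp$… more simply, $\bar a_j$ lies in the span of $\bar\Gamma_j$ and $\bar\Gamma_s\subseteq\bar\Gamma_j^{\,}$ for $s>j$ forces $\bar k\perp\bar a_j$), so the sign conditions from the earlier stages give no information and we are genuinely looking at a ``new'' partial order on $\bar\Gamma_s$. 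Again $\{\bar k\in\bar\Gamma_s : T_{\bar k}u=u\}$ is a subgroup, the remaining elements split by trichotomy into a positive set and its negative, the positive set is additively closed, and hence is cut out by a linear functional $\bar a_s$, unique up to positive scaling and normalized to a unit vector. The process terminates: at each stage $\rank\bar\Gamma_{s+1} < \rank\bar\Gamma_s$ unless $\bar\Gamma_{s+1}=\bar\Gamma_s$, and since $\bar a_s\in\lspan\bar\Gamma_s$ is a nonzero vector orthogonal to $\bar\Gamma_{s+1}$ the rank genuinely drops (or $\bar\Gamma_s$ was already the full stabilizer, in which case we have already stopped). Hence after $t\le n+1$ steps we reach the first $t$ with $T_{\bar k}u=u$ for all $\bar k\in\bar\Gamma_{t+1}$, which is precisely (ii); and (i) is just the union over $s$ of the half-space descriptions produced at each stage, because any $\bar k\in\DZ^{n+1}$ lies in exactly one ``layer'': let $s$ be the smallest index with $\bar k\in\bar\Gamma_s$ and $\bar k\notin\bar\Gamma_{s+1}$ — wait, more carefully, $\bar k$ either lies in $\bar\Gamma_{t+1}$ (then $T_{\bar k}u=u$) or there is a smallest $s\le t$ with $\bar k\notin\bar\Gamma_{s+1}$, equivalently $\bar k\in\bar\Gamma_s$ with $\bar k\cdot\bar a_s\ne0$, and then the sign of $\bar k\cdot\bar a_s$ decides $>$ versus $<$.

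The main obstacle, and the place where I would be most careful, is establishing that on each $\bar\Gamma_s$ the ``positive set'' $P_s := \{\bar k\in\bar\Gamma_s\setminus\bar\Gamma_{s+1}^{\mathrm{stab}} : T_{\bar k}u > u\}$ is actually additively closed and that its complement behaves antisymmetrically — in other words, that the comparability relation coming from the total ordering of the $T$-orbit assembles into a genuine \emph{translation-invariant total preorder} on the lattice $\bar\Gamma_s$ whose ``strictly positive'' part is a semigroup. Additive closure of $P_s$ is the crux: if $T_{\bar k}u>u$ and $T_{\bar\ell}u>u$ with $\bar k,\bar\ell\in\bar\Gamma_s$, one wants $T_{\bar k+\bar\ell}u>u$; applying $T_{\bar\ell}$ (which is order-preserving on functions) to $T_{\bar k}u>u$ gives $T_{\bar k+\bar\ell}u > T_{\bar\ell}u > u$, so this is in fact immediate from $T$ being an action by order isomorphisms. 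The subtler point is trichotomy on $\bar\Gamma_s$ itself and the claim that a sub-semigroup $P$ of a free abelian group $\Lambda$ with $P\cap(-P)=\emptyset$ and $P\cup(-P)\cup\{0\}=\Lambda$ must be of the form $\{k : \langle k,a\rangle>0\}$ for some $a\in\Lambda\otimes\DR$ — this is a standard fact about total orders on abelian groups (the order is Archimedean on $\Lambda/\ker$ after tensoring with $\DR$, or one invokes that $P$ spans a half-space because its convex hull in $\Lambda\otimes\DR$ is a half-space not containing any line through $0$ by the antisymmetry), but it needs to be cited or spelled out, and one must check that the resulting $\bar a_s$ really lies in $\lspan\bar\Gamma_s$ rather than merely in $\DR^{n+1}$ — which it does because the functional $\bar k\mapsto\langle\bar k,\bar a_s\rangle$ on $\bar\Gamma_s$ only sees the component of $\bar a_s$ along $\lspan\bar\Gamma_s$, so we may and do choose $\bar a_s\in\lspan\bar\Gamma_s$. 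Everything else (uniqueness, the termination count $t\le n+1$) is then bookkeeping. For full rigor I would quote \cite[(3.3)--(3.7)]{Ba89} for the hard combinatorial core and present the argument above as an exposition of it.
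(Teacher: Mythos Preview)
The paper does not prove this proposition; it simply records the statement as a summary of Bangert's classification \cite[(3.3)--(3.7)]{Ba89}. Your exposition follows the right inductive scheme, but the core order-theoretic step is misstated and, as written, the argument would not work.

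You assert at each stage a biconditional, ``$T_{\bar k}u>u \iff \bar k\cdot\bar a_s>0$ for $\bar k\in\bar\Gamma_s$'', and in your final paragraph you justify it by the claim that a sub-semigroup $P$ of a free abelian group $\Lambda$ with $P\cap(-P)=\emptyset$ and $P\cup(-P)\cup\{0\}=\Lambda$ must equal $\{k:\langle k,a\rangle>0\}$ for some $a$. This is false: the lexicographic order on $\DZ^2$ has positive cone $\{(a,b): a>0\}\cup\{(0,b):b>0\}$, which is not an open half-space. Non-Archimedean total group orders on $\DZ^m$ are precisely the reason the proposition needs a \emph{sequence} $\bar a_1,\ldots,\bar a_t$ rather than a single vector. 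Note also that your claim is internally inconsistent with the rest of your outline: if the biconditional held at stage $s$, then every $\bar k\in\bar\Gamma_{s+1}=\bar\Gamma_s\cap\bar a_s^{\perp}$ would automatically satisfy $T_{\bar k}u=u$, forcing $t=s$ and making the further iteration you describe vacuous; no $u$ with $t(u)\ge 3$ could ever arise.

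The correct statement at stage $s$ is only one implication. The closure in $\lspan\bar\Gamma_s$ of the convex cone generated by $P_s=\{\bar k\in\bar\Gamma_s:T_{\bar k}u>u\}$ is a closed half-space (this uses that $P_s\cup(-P_s)$ together with the stabilizer exhaust $\bar\Gamma_s$), and $\bar a_s$ is its inward unit normal; hence $\bar k\cdot\bar a_s>0\Rightarrow T_{\bar k}u>u$, while for $\bar k\in\bar\Gamma_s$ with $\bar k\cdot\bar a_s=0$ all three possibilities remain open and are resolved at stage $s+1$. The ``if and only if'' in (i) emerges only after assembling all stages. With this correction the rest of your outline (additive closure of $P_s$ via order-preservation of the $T_{\bar k}$, uniqueness of $\bar a_s$ as the supporting normal, the rank drop, termination in at most $n+1$ steps) does match Bangert's argument.
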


Since, as proved in \cite[Theorem 5.6]{Mos}, if $|\bar a_1| =1$ and $\bar a_1
\cdot \bar e_{n+1} > 0$, there always exist functions $u \in \mcm$ with $a_1(u)
= \bar a_1$, we have that the set to which the above result applies is
non-empty.

A system of unit vectors $(\bar a_1,\ldots, \bar a_t)$ is called {\em
  admissible} if $\bar a_1 \cdot \bar e_{n+1} >0$ and relation \eqref{adm} is
satisfied. For an admissible system $(\bar a_1,\ldots,\bar a_t)$ one writes
$$\mcm(\bar a_1,\ldots,\bar a_t) = 
\big\{u \in \mcm \mid t(u) = t\, \mbox{and } \bar a_s(u) = \bar a_s\;\mbox{for }
1\le s \le t\big \}\,.$$ Many results in the above setting have been obtained by
\cite{Ba89} and some of them will be needed in the sequel.  For instance, in the
following Proposition \ref{4.2}, we recall that for a given solution $u$, there
exist ``envelopping'' solutions $u^-$ and $u^+$ of higher periodicity:

\bigskip\bigskip

\label{fig1}
\vbox{
\centerline{\epsfxsize=4truein \epsfbox{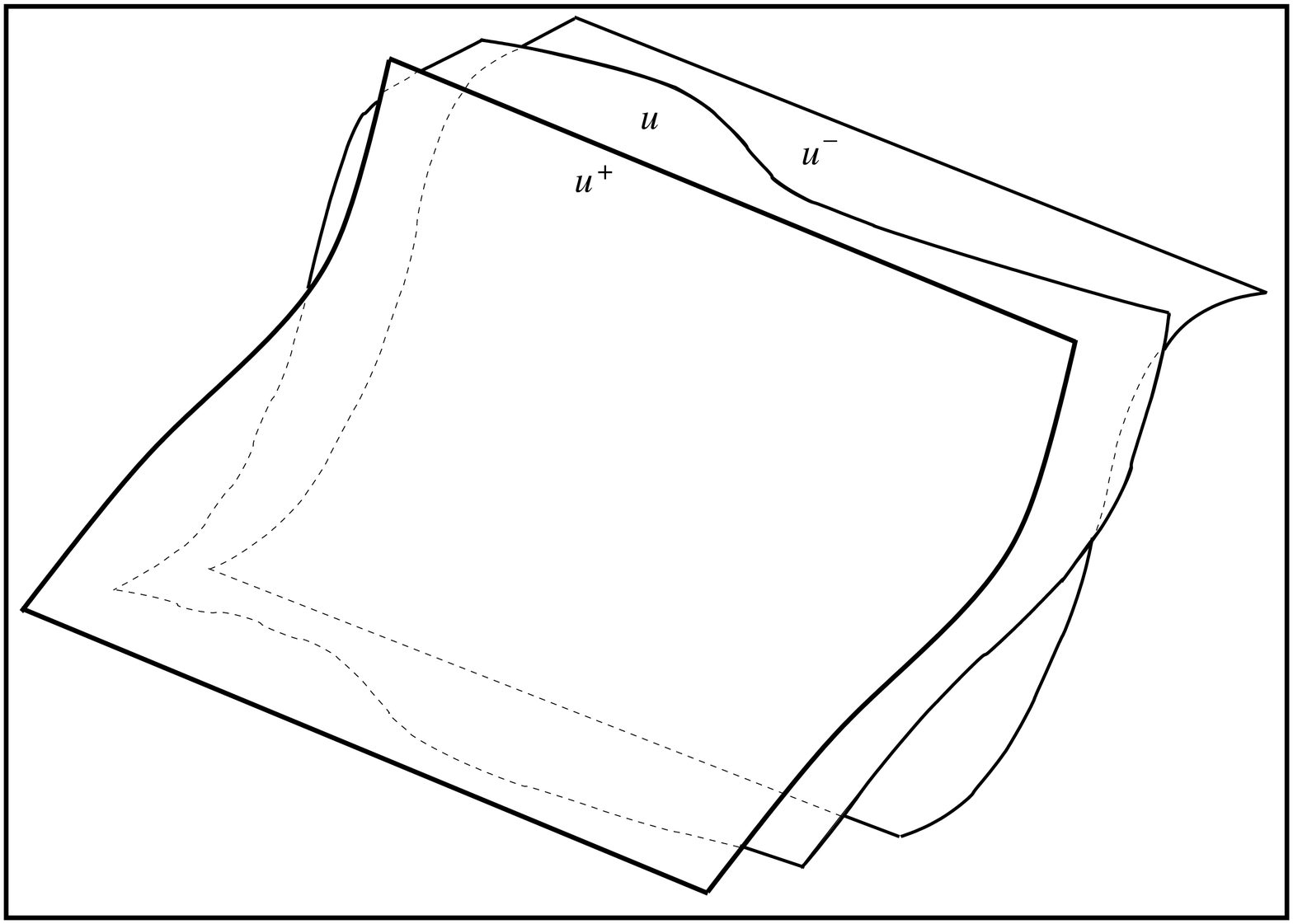}}
\bigskip

\nopagebreak

\centerline{\footnotesize\bf 
Envelopping solutions $u^\pm$.}\bigskip\bigskip}

\begin{prop}\label{4.2}
  If $u \in \mcm(\bar a_1,\ldots, \bar a_t)$ and $t > 1$, then there exist
  functions $u^-$ and $u^+$ in $\mcm(\bar a_1, \ldots \bar a_{t-1})$ with the
  following properties:
  \begin{enumerate}[{\em (a)}]
  \item If $\bar k_i \in \bar \Gamma_t$ and $\displaystyle\lim_{i \to \infty}
    \bar k_i \cdot \bar a_t = \pm \infty$ then $\displaystyle\lim_{i \to \infty}
    T_{\bar k_i} = u^\pm$\,,
  \item $u^-<u<u^+$ and $T_{\bar k} u^- \ge u^+$ if $k \in \bar \Gamma_s$ and
    $\bar k \cdot \bar a_s > 0$ for some $1\le s <t$.
  \end{enumerate}
\end{prop}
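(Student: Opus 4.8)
The plan is to construct $u^+$ and $u^-$ as limits of the integer translates $T_{\bar k}u$, $\bar k\in\bar\Gamma_t:=\bar\Gamma_t(u)$, taken along $\bar k\cdot\bar a_t\to+\infty$, respectively $\to-\infty$. Everything reduces to showing that this one–parameter orbit is monotone and precompact in $C^2_{\loc}$, and then to reading off the Bangert invariants of the two limits from Proposition \ref{graphbasic}.

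First I would establish monotonicity and compactness. From \eqref{adm} the lattices satisfy $\bar\Gamma_1\supseteq\dots\supseteq\bar\Gamma_{t+1}$, so any $\bar k\in\bar\Gamma_t$ lies in every $\bar\Gamma_s$ with $s\le t$ and, being orthogonal to $\bar a_1,\dots,\bar a_{t-1}$, has $\bar k\cdot\bar a_s=0$ for $s<t$; hence Proposition \ref{graphbasic} applies verbatim and shows, for $\bar k,\bar k'\in\bar\Gamma_t$, that $T_{\bar k-\bar k'}u$ is $>u$, $=u$ or $<u$ exactly according to the sign of $(\bar k-\bar k')\cdot\bar a_t$. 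Translating by $\bar k'$, the family $\{T_{\bar k}u:\bar k\in\bar\Gamma_t\}$ is totally ordered and depends monotonically on the single real number $\bar k\cdot\bar a_t$, whose range is unbounded in both directions since $\bar a_t\in\lspan\bar\Gamma_t\setminus\{0\}$. Moreover each such $\bar k$ is orthogonal to $\bar a_1$, so $k_{n+1}=\rho\cdot k$, and then \eqref{AAJ} gives $|T_{\bar k}u(x)-T_{\bar k}u(0)-\rho\cdot x|\le C$ with $T_{\bar k}u(0)=u(-k)+k_{n+1}\in[u(0)-C,u(0)+C]$; so the orbit is locally uniformly bounded, and \eqref{Ba88et} together with interior elliptic estimates for the Euler--Lagrange equation (Moser's regularity theory, \cite{Mos}) gives uniform $C^{2,\epsilon}_{\loc}$ bounds. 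Since a monotone, locally uniformly bounded family of solutions converges in $C^2_{\loc}$, I would set $u^\pm:=\lim_{\bar k\cdot\bar a_t\to\pm\infty}T_{\bar k}u$; passing \eqref{eq:mos_min} to the limit makes $u^\pm$ minimizers. This already gives (a): any sequence $\bar k_i$ with $\bar k_i\cdot\bar a_t\to+\infty$ is eventually above every fixed translate, hence converges pointwise, and by the $C^{2,\epsilon}_{\loc}$ bound in $C^2_{\loc}$, necessarily to $u^+$.

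Next I would pin down the invariants of $u^\pm$ and prove (b). Since $T_{\bar\ell}T_{\bar m}u=T_{\bar\ell+\bar m}u$ and each $T_{\bar\ell}$ is $C^2_{\loc}$-continuous, for $\bar k_0\in\bar\Gamma_t$ with $\bar k_0\cdot\bar a_t>0$ and any $\bar k\in\DZ^{n+1}$ one has $T_{\bar k}u^+=\lim_m T_{\bar k+m\bar k_0}u$ and $T_{\bar k}u^-=\lim_m T_{\bar k-m\bar k_0}u$. As $\bar k+m\bar k_0\in\bar\Gamma_s$ iff $\bar k\in\bar\Gamma_s$, with $(\bar k+m\bar k_0)\cdot\bar a_s=\bar k\cdot\bar a_s$ for $s\le t-1$ and $(\bar k+m\bar k_0)\cdot\bar a_t\to+\infty$, Proposition \ref{graphbasic} immediately yields $T_{\bar k}u^+=u^+$ for every $\bar k\in\bar\Gamma_t$, and also $u^+>u$ (because $u^+\ge T_{\bar k}u>u$ for any fixed $\bar k\in\bar\Gamma_t$ with $\bar k\cdot\bar a_t>0$), and symmetrically $u^-<u$. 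For the envelope inequality, take $\bar k\in\bar\Gamma_s$ with $\bar k\cdot\bar a_s>0$ and $s<t$: the translation vector of $T_{\bar k-m\bar k_0}u$ relative to $T_{m'\bar k_0}u$ is $\bar k-(m+m')\bar k_0\in\bar\Gamma_s$ with $\bar a_s$-component equal to $\bar k\cdot\bar a_s>0$, so $T_{\bar k-m\bar k_0}u>T_{m'\bar k_0}u$ for all $m,m'\ge0$, and letting first $m\to\infty$ and then $m'\to\infty$ gives $T_{\bar k}u^-\ge u^+$, which is the remaining part of (b). Combining with $u^+>u^-$ gives $T_{\bar k}u^+>T_{\bar k}u^-\ge u^+$ for such $\bar k$ (and $T_{\bar k}u^+<u^+$ when $\bar k\cdot\bar a_s<0$, by applying the same to $-\bar k$), so for every $\bar k$ exactly one of $T_{\bar k}u^+>u^+$, $=u^+$, $<u^+$ holds; hence $u^+\in\mcm$, and the rule just proved --- $>u^+$ iff $\bar k\in\bar\Gamma_s$ with $\bar k\cdot\bar a_s>0$ for some $s\le t-1$, and $=u^+$ iff $\bar k\in\bar\Gamma_t$ --- is exactly the characterization in Proposition \ref{graphbasic} of the admissible system $(\bar a_1,\dots,\bar a_{t-1})$ (admissible because \eqref{adm} for $(\bar a_1,\dots,\bar a_t)$ restricts to it). By the uniqueness clause of Proposition \ref{graphbasic} this forces $u^+\in\mcm(\bar a_1,\dots,\bar a_{t-1})$, and likewise for $u^-$.

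The routine parts are the $C^{2,\epsilon}_{\loc}$ compactness and the lattice bookkeeping. I expect the main obstacle to be ensuring that no strict inequality degenerates in the limit --- equivalently, that the stabilizer of $u^+$ is exactly $\bar\Gamma_t(u)$ and not strictly larger --- so that $u^\pm$ has invariant level exactly $t-1$ rather than something smaller. This is precisely what the quantitative envelope estimate $T_{\bar k}u^-\ge u^+>u>u^-$ in (b) is for: it upgrades the merely non-strict inequalities produced by passing to a limit into the sharp trichotomy needed to match Proposition \ref{graphbasic}, after which the uniqueness of the Bangert invariants closes the argument.
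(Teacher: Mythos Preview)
The paper does not actually prove Proposition~\ref{4.2}; it simply states the result and refers the reader to \cite[Proposition~(4.2)]{Ba89} for the proof. Hence there is no in-paper argument to compare against.

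That said, your reconstruction is essentially the standard argument one expects from Bangert's framework, and it is correct: the orbit $\{T_{\bar k}u:\bar k\in\bar\Gamma_t\}$ is ordered by $\bar k\cdot\bar a_t$ via Proposition~\ref{graphbasic}, locally bounded by \eqref{AAJ} because $\bar k\perp\bar a_1$ forces $k_{n+1}=\rho\cdot k$, and precompact by Moser's interior estimates; the monotone limits $u^\pm$ are then minimizers, and the trick $T_{\bar k-m\bar k_0}u>T_{m'\bar k_0}u$ recovers the strict envelope inequality $T_{\bar k}u^-\ge u^+$ which in turn prevents the stabilizer of $u^\pm$ from being larger than $\bar\Gamma_t$. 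Your final step---matching the trichotomy you proved against the uniqueness clause of Proposition~\ref{graphbasic} to conclude $t(u^\pm)=t-1$---is exactly the right way to close the argument. One small point worth making explicit: when you say the family ``depends monotonically on the single real number $\bar k\cdot\bar a_t$'', you are implicitly using that $\bar k,\bar k'\in\bar\Gamma_t$ with $\bar k\cdot\bar a_t=\bar k'\cdot\bar a_t$ forces $\bar k-\bar k'\in\bar\Gamma_{t+1}$ and hence $T_{\bar k}u=T_{\bar k'}u$; this is immediate from the definitions but deserves a sentence.
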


For the proof, see \cite[Proposition (4.2)]{Ba89}. A completely satisfactory
uniqueness result in this framework is
\begin{theorem}[\cite{Ba89}, Theorem (6.22)]\label{order}
  If $(\bar a_1, \ldots, \bar a_t)$ is admissible, then the disjoint union
  $\mcm(\bar a_1,) \cup \mcm(\bar a_1, \bar a_2) \cup \ldots \cup \mcm(\bar a_1,
  \ldots, \bar a_t)$ is totally ordered.
\end{theorem}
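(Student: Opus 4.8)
The plan is to prove the theorem by induction on $t$, with base case $t=1$ being Moser's total‑ordering result for $\mcm(\bar a_1)$, and to break every comparison into two parts: first, that any two members $u,v$ of the union are \emph{comparable}, meaning $u\le v$ or $v\le u$ pointwise on $\DR^n$; second, that comparability can be upgraded to the trichotomy $u<v$, $u=v$, $u>v$. The upgrade is the easy half. If $u\le v$ everywhere, set $w:=v-u\ge 0$; subtracting the two Euler--Lagrange equations and applying the mean value theorem in the arguments shows that $w$ solves a linear second–order equation $\sum a_{ij}\partial_{ij}w+\sum b_i\partial_i w+cw=0$ whose leading part is uniformly elliptic and whose coefficients are locally bounded, by the structure conditions \eqref{Ba88et} together with the $C^{2,\epsilon}$ regularity of minimizers; by the strong maximum principle, if $w$ vanishes at one point it vanishes identically, so $u<v$ or $u=v$. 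Thus everything reduces to comparability, and the crucial structural fact here is that all functions in the union share the first invariant $\bar a_1$, so by \eqref{AAJ} any two of them satisfy $\sup_{\DR^n}|u-v|<\infty$.

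For the base case one argues, following Moser, that two non‑self‑intersecting minimizers $u,v\in\mcm(\bar a_1)$ at bounded distance cannot genuinely cross. If $\{u>v\}$ and $\{u<v\}$ were both nonempty, one slides the integer translates $T_{(0,m)}u=u+m$ down from $m\gg 0$ until $u+m$ first fails to dominate $v$, extracts via interior $C^2$ estimates a limiting minimizer that touches $v$ without coinciding with it, and contradicts the strong maximum principle; equivalently, one uses the energy–splitting identity $\int_BF(x,u,u_x)+\int_BF(x,v,v_x)=\int_BF(x,u\wedge v,(u\wedge v)_x)+\int_BF(x,u\vee v,(u\vee v)_x)$ on large balls $B$ and the minimality of $u$ and $v$ to see that such a crossing could be removed with a strict decrease of energy. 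In both forms the non‑self‑intersection of $u$ and $v$ is what guarantees the crossing set is genuinely nontrivial and cannot be pushed off to infinity. I would cite this as Moser's theorem rather than reprove it in detail.

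For the inductive step, assume the union up to level $t-1$, call it $\Sigma_{t-1}=\mcm(\bar a_1)\cup\cdots\cup\mcm(\bar a_1,\ldots,\bar a_{t-1})$, is totally ordered and take $u,v$ in the level‑$t$ union; the only case not already covered is $u,v\in\mcm(\bar a_1,\ldots,\bar a_t)$. By Proposition \ref{4.2} each of $u,v$ has enveloping solutions $u^{\pm},v^{\pm}\in\mcm(\bar a_1,\ldots,\bar a_{t-1})\subset\Sigma_{t-1}$ with $u^{-}<u<u^{+}$ and $v^{-}<v<v^{+}$, and by the inductive hypothesis these four envelopes are mutually ordered. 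If the ``intervals'' $(u^{-},u^{+})$ and $(v^{-},v^{+})$ are disjoint in this order, say $u^{+}\le v^{-}$, then $u<u^{+}\le v^{-}<v$ and we are done, so the content lies in the overlapping case. There I would use Proposition \ref{4.2}(a): $u^{\pm}$ are $C^2_{loc}$–limits of translates $T_{\bar k_i}u$ with $\bar k_i\in\bar\Gamma_t$ and $\bar k_i\cdot\bar a_t\to\pm\infty$, while by Proposition \ref{graphbasic}(i) the family $\{T_{\bar k}u:\bar k\in\bar\Gamma_t\}$ is itself totally ordered and strictly increasing in $\bar k\cdot\bar a_t$, sweeping monotonically from $u^{-}$ to $u^{+}$. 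Comparing $v$ with this monotone sweep forces $v$ to lie below or above some translate $T_{\bar k}u$, and hence — by the strong maximum principle applied to $v$ and a nearby translate — genuinely below or above it, so that translating back gives $u\le v$ or $v\le u$. The main obstacle, and the only genuinely delicate step, is precisely this last comparison: making the ``$v$ is caught between two consecutive translates'' argument rigorous in the absence of compactness, i.e.\ controlling $T_{\bar k}u-v$ as $\bar k\cdot\bar a_t\to\pm\infty$ (where $v-u^{\pm}$ may decay at infinity). This is where the bounded Hausdorff distance \eqref{AAJ}, the interior elliptic estimates, and the recurrence properties of non‑self‑intersecting minimizers from \cite{Ba89} all have to be combined.
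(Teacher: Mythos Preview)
The paper does not give its own proof of this theorem: it is quoted from \cite{Ba89}, and the paper's sole comment on the argument is that it ``heavily rests on'' Theorem~\ref{Morse} (Bangert's Theorem~(6.6)), the gap statement that no $w\in\mcm(\bar a_1,\ldots,\bar a_{t-1})$ can sit strictly between $u^-$ and $u^+$ for $u\in\mcm(\bar a_1,\ldots,\bar a_t)$. Your proposal never invokes Theorem~\ref{Morse}, and that is exactly where it breaks down.

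There are two concrete gaps. First, your claim that ``the only case not already covered is $u,v\in\mcm(\bar a_1,\ldots,\bar a_t)$'' is false: the inductive hypothesis only orders $\Sigma_{t-1}$, so the mixed case $u\in\Sigma_{t-1}$, $v\in\mcm(\bar a_1,\ldots,\bar a_t)$ is still open. Passing to envelopes reduces it to excluding $v^{-}<u<v^{+}$ with $u\in\Sigma_{t-1}$, and for $u\in\mcm(\bar a_1,\ldots,\bar a_{t-1})$ this is precisely the content of Theorem~\ref{Morse}. Second, in the overlapping same-level case your ``monotone sweep'' is not a proof: the orbit $\{T_{\bar k}u:\bar k\in\bar\Gamma_t\}$ is only a \emph{lamination} of the strip between $u^{-}$ and $u^{+}$, in general with gaps, so nothing a priori prevents $v$ from crossing \emph{every} translate $T_{\bar k}u$; you cannot conclude that $v$ lies weakly on one side of some translate without further input. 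You yourself flag this as ``the only genuinely delicate step'' and leave it open. In Bangert's scheme that step is organized around Theorem~\ref{Morse} (which, in particular, forces coinciding envelopes $u^{\pm}=v^{\pm}$ in the overlapping case before any finer comparison is attempted), and the paper explicitly notes that even Theorem~\ref{Morse} itself required a later completion in \cite{Ju2}.
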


We point out that the proof of this theorem heaviliy rests on the following
result, which may be seen as a uniqueness (or gap-like) statement for $u^-$ and
$u^+$ in $\mcm(\bar a_1, \ldots \bar a_{t-1})$. The proof of this result is
incomplete in \cite{Ba89} and a completion is given in \cite{Ju2}.
\begin{theorem}[\cite{Ba89}, Thoerem (6.6)]\label{Morse}
  If $u \in \mcm(\bar a_1, \ldots, \bar a_t)$ and $t > 1$, then there does not
  exist $v \in \mcm(\bar a_1, \ldots, \bar a_{t-1})$ such that $u^- < v < u^+$.
\end{theorem}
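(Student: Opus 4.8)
The plan is to argue by contradiction: suppose $u \in \mcm(\bar a_1,\ldots,\bar a_t)$ with $t>1$ and that some $v \in \mcm(\bar a_1,\ldots,\bar a_{t-1})$ satisfies $u^- < v < u^+$. The key structural fact to exploit is Proposition~\ref{4.2}(a), which realizes $u^\pm$ as limits of translates $T_{\bar k_i} u$ along sequences $\bar k_i \in \bar\Gamma_t$ with $\bar k_i \cdot \bar a_t \to \pm\infty$; combined with part~(b), the orbit of $u$ under $\bar\Gamma_t$ sweeps out the entire gap between $u^-$ and $u^+$. First I would use Theorem~\ref{order} applied to the admissible system $(\bar a_1,\ldots,\bar a_{t-1})$ together with the strict inequalities $u^-<v<u^+$ and the fact that $u^\pm \in \mcm(\bar a_1,\ldots,\bar a_{t-1})$: since the union $\mcm(\bar a_1) \cup \cdots \cup \mcm(\bar a_1,\ldots,\bar a_{t-1})$ is totally ordered, $v$ is comparable to every translate $T_{\bar k} u^-$ and $T_{\bar k} u^+$ for $\bar k \in \bar\Gamma_{t-1}$ with $\bar k \cdot \bar a_{t-1} \ne 0$.

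The heart of the argument is then to trap $v$ in a contradiction by sliding it against the $\bar\Gamma_t$-orbit of $u$. Since $\bar a_t \in \lspan \bar\Gamma_t$ and the translates $T_{\bar k_i} u$ interpolate monotonically from $u^-$ to $u^+$ as $\bar k_i \cdot \bar a_t$ ranges over (a dense enough subset of) $\DR$, and since $v$ lies strictly between $u^-$ and $u^+$, there must be some ``first'' translate $T_{\bar k} u$ that touches $v$ from one side — more precisely one considers $w := \inf\{T_{\bar k} u : \bar k \in \bar\Gamma_t,\ T_{\bar k} u \ge v\}$ (or the analogous sup from below), or one takes a limit of minimizing translates. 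By the limit description in Proposition~\ref{4.2}(a) together with standard compactness for the minimizers (uniform $C^{2,\epsilon}$ bounds from the Moser regularity theory recalled in \S\ref{Ba:set}), this infimum is attained or is itself in $\mcm(\bar a_1,\ldots,\bar a_{t-1})$, and it must equal one of $u^\pm$ or be a translate $T_{\bar k} u$ lying in $\mcm(\bar a_1,\ldots,\bar a_t)$. In the latter case one would have a minimizer $T_{\bar k} u \in \mcm(\bar a_1,\ldots,\bar a_t)$ lying above $v \in \mcm(\bar a_1,\ldots,\bar a_{t-1})$ with equality somewhere, contradicting the strong maximum principle for the elliptic Euler--Lagrange equation (two solutions that touch must coincide, but their rotation data differ so they cannot). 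In the former case, $w = u^-$ or $w = u^+$ forces $v \le u^-$ or $v \ge u^+$, against the strict inequalities assumed.

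I expect the main obstacle to be making the ``sliding'' step rigorous: one must show that the relevant infimum of $\bar\Gamma_t$-translates of $u$ is actually achieved by a genuine minimizer (not merely a subsolution or a limit that degenerates), and that when it touches $v$ the touching is interior so that the strong maximum principle applies. This requires the compactness of $\mcm$ under the topology of local $C^2$ convergence — which follows from the uniform regularity estimates for minimizers under the growth conditions \eqref{Ba88et} — and a careful bookkeeping of which stratum $\mcm(\bar a_1,\ldots,\bar a_s)$ the limit belongs to, using the characterization in Proposition~\ref{graphbasic} of the invariants $\bar a_s$ in terms of the comparison behavior of $T$-translates. The subtlety that the proof of this very statement was incomplete in \cite{Ba89} (and completed in \cite{Ju2}) suggests that the delicate point is precisely ruling out the borderline possibility that the sliding translate converges to $v$ itself without ever crossing it, i.e.\ that $v$ is an accumulation point of the $\bar\Gamma_t$-orbit of $u$ from strictly one side; handling this likely needs the fact from Theorem~\ref{order} that the whole tower of strata is totally ordered, which pins down $v$ relative to $u^\pm$ rigidly enough to exclude it.
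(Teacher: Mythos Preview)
Your proposal contains a circularity that undermines the argument. You invoke Theorem~\ref{order} at two places: once at the outset to get comparability of $v$ with translates of $u^\pm$, and again at the end as the tool you expect to resolve the delicate borderline case. But the paper states explicitly, in the sentence immediately preceding Theorem~\ref{Morse}, that the proof of Theorem~\ref{order} \emph{heavily rests on} Theorem~\ref{Morse}. So Theorem~\ref{order} is downstream of the statement you are trying to prove, and you cannot appeal to it here. This is not a cosmetic issue: the very step you flag as the ``main obstacle'' --- ruling out that $v$ is a one-sided accumulation point of the $\bar\Gamma_t$-orbit of $u$ --- is precisely the step where you fall back on Theorem~\ref{order}, so the gap is real and sits at the heart of the argument.

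Beyond the circularity, note that the present paper does not supply its own proof of Theorem~\ref{Morse}; it cites \cite{Ba89} for the original argument and \cite{Ju2} for the completion of the part that was incomplete there. So there is no in-paper proof to compare your sliding strategy against. Your overall outline --- contradiction, slide the $\bar\Gamma_t$-translates of $u$ against $v$, and apply the strong maximum principle at a first touching point --- is the natural shape such an argument takes, and it is in the spirit of \cite{Ba89}. But making it go through requires establishing the needed ordering and the exclusion of the degenerate limit \emph{without} Theorem~\ref{order}; that is exactly the content that \cite{Ju2} supplies, and your proposal does not yet contain an independent mechanism for it.
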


Bangert posed a deep question in this framework in the very last paragraph of
\cite{Ba89}: \medskip

{\bf Problem \cite{Ba89}:} {\em Is it true that if $u$ is a minimal solution and
  there exist $C\ge0$ and $\rho\in\DR^n$ with $|u(x)-u(0)-\rho\cdot x|\leq C$
  for any $x\in\DR^n$, then $u$ must be without self-intersections?  }\medskip

We recall that the above question is known to have a positive answer when $\bar
a_1$ is rationally independent, cf.~\cite[Theorem (8.4)]{Ba89}.  Remarkably, the
connection between Problems {\bf \cite{DG}$_{MIN}$} and {\bf \cite{Ba89}} will
happen exactly when $\bar a_1=\bar e_{n+1}=(0,\dots,0,1)$, which is rationally
dependent.

The last notion we need to recall is the one of {\em foliation}.  We say that a
connected open subset $G\subseteq\DR^{n+1}$ is foliated by a subset $\mcn
\subseteq C^0(\DR^n)$ if
\begin{align*}
  \graph (u) \cap \graph(v) &= \emptyset \quad \mbox{for all } u,v \in \mcn \quad\mbox{and}\\
  \bigcup_{u \in \DN} \graph (u) &= G\,.
\end{align*}

\section{Our result}

We are now in position to state the rigidity result which is the main purpose of
this note.  On the one hand, as we will see, the proof of it will be a simple
application of the deep results already available in the existing literature.
On the other hand, this result will bridge the problem of De Giorgi with the one
of Bangert.

\begin{theorem}\label{ba_dg} Let the setting of \S\ref{Ba:set} hold
  and let $t\in\DN$, $t\geq 2$.  Suppose that $(\bar a_1, \ldots, \bar a_t)$ is
  admissible and $u_1$, $u_2 \in \mcm(\bar a_1, \ldots, \bar a_{t-1})$ and
  $u_1<u_2$.  Assume that the set
\begin{equation}\label{Gde}
  G = \Big\{(x,x_{n+1})\mid u_1(x) < x_{n+1} < u_2(x)\Big\}
\end{equation}
is foliated by a one-parameter family of functions $(v_b)_{b\in \DR} \subset
\mcm(\bar a_1, \ldots, \bar a_t)$.  Then
\begin{enumerate}[\em (I)]
\item\label{I} $(v_b)^- = u_1$ and $(v_b)^+= u_2$ for every $b\in \DR$.
\item\label{II} If $u \in \mcm$ with $u_1 < u < u_2$, then $t(u) \ge t$ and $\bar a_i(u) =
  \bar a_i$ for any $1\le i < t$. If furthermore $\bar a_t (u) = \bar a_t$, then
  there exists $b_0 \in \DR$ such that $u = v_{b_0}$, and in particular $t(u) =
  t$.
\end{enumerate}
\end{theorem}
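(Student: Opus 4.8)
The plan is to exploit the envelope/gap machinery of Bangert (Propositions~\ref{4.2}, Theorems~\ref{order} and~\ref{Morse}) together with the fact that a foliation is an exhaustive, pairwise-disjoint family of graphs, so that the order structure of $\mcm(\bar a_1,\ldots,\bar a_t)$ is rigidly constrained inside the slab $G$.

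\paragraph{Step 1: identifying the envelopes (part (I)).}
Fix $b\in\DR$ and set $w^\pm := (v_b)^\pm \in \mcm(\bar a_1,\ldots,\bar a_{t-1})$, which exist since $t\ge 2$. By Proposition~\ref{4.2}(b) we have $w^- < v_b < w^+$, so $\graph(w^\pm)$ lie outside the open region bounded by $\graph(v_b)$; I first want to show $w^- \le u_1$ and $w^+ \ge u_2$, hence (since $u_1,u_2$ bound $G$ and the $v_b$ foliate $G$) that $w^-=u_1$ and $w^+=u_2$. To get $w^-\ge u_1$: note $v_b$ lies in the slab, and as $\bar k_i\cdot\bar a_t\to -\infty$ along $\bar k_i\in\bar\Gamma_t$ the translates $T_{\bar k_i}v_b$ converge to $w^-$; each $T_{\bar k_i}v_b$ is again in $\mcm(\bar a_1,\ldots,\bar a_t)$ and, because $\bar k_i\in\bar\Gamma_t\subseteq(\lspan\{\bar a_1,\ldots,\bar a_{t-1}\})^\perp$, Theorem~\ref{order} applied to the admissible system $(\bar a_1,\ldots,\bar a_{t-1})$ shows $T_{\bar k_i}v_b$ is comparable with $u_1$. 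If some $T_{\bar k_i}v_b < u_1$, then since the $v_{b'}$ foliate the open slab between $\graph u_1$ and $\graph u_2$ and $T_{\bar k_i}v_b \in \mcm(\bar a_1,\ldots,\bar a_t)$ is itself one of the leaves' translates, one obtains a graph strictly below the foliation yet limiting into it — I would derive a contradiction with the total order of Theorem~\ref{order} (the translate cannot cross $u_1$). Passing to the limit gives $w^- \ge u_1$; combined with $w^-\le u_1$ (which follows because $v_b>w^-$ forces $w^-$ to sit below the whole slab, as every leaf of the foliation lies above $w^-$ by disjointness of graphs and connectedness) we get $w^-=u_1$, and symmetrically $w^+=u_2$. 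This is where the argument must be done carefully.

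\paragraph{Step 2: trapping an arbitrary $u\in\mcm$ with $u_1<u<u_2$ (part (II)).}
Given such a $u$, first I claim $t(u)\ge t$ and $\bar a_i(u)=\bar a_i$ for $i<t$. Since $\graph(u)$ lies in the slab $G$, pick any leaf $v_b$; by Theorem~\ref{order} (applied to the admissible system $(\bar a_1,\ldots,\bar a_{t-1})$, after verifying $u$ has the right leading vectors — which itself follows from \eqref{AAJ} and the fact that $u$ is squeezed between $u_1,u_2\in\mcm(\bar a_1,\ldots,\bar a_{t-1})$, forcing $\bar a_1(u)=\bar a_1$, and then inductively the next ones using Proposition~\ref{4.2}(b) and the envelope computation of Step~1) the orbit of $u$ and the $v_b$ together are totally ordered. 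Concretely: $\bar a_1(u)=\bar a_1$ because $u$ stays within bounded distance of the same hyperplane as $u_1$; then if $t(u)=1$ one would have $T_{\bar k}u=u$ for all $\bar k\in\bar\Gamma_2$, but such $\bar k$ move $u$ off itself within the slab, contradicting that $u_1,u_2$ (whose relevant translates agree with them) pin the slab — so $t(u)\ge 2$, and one iterates to get $\bar a_i(u)=\bar a_i$ for $i<t$, hence $t(u)\ge t$.

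\paragraph{Step 3: rigidity when $\bar a_t(u)=\bar a_t$.}
Now suppose additionally $\bar a_t(u)=\bar a_t$, so $u\in\mcm(\bar a_1,\ldots,\bar a_t)$ (if $t(u)>t$ one argues it cannot be, because then $u$ would have its own proper envelopes inside the slab, contradicting Step~1 via Theorem~\ref{Morse}; more precisely $u^\pm\in\mcm(\bar a_1,\ldots,\bar a_{t-1})$ with $u^-<u<u^+$ inside $G$, and Step~1 identifies $u^\pm$ with $u_1,u_2$, i.e.\ $u_1<u<u_2$ with $u^-=u_1$, $u^+=u_2$; then Theorem~\ref{Morse} applied to any leaf $v_b$ — whose envelopes are also $u_1,u_2$ — says there is no element of $\mcm(\bar a_1,\ldots,\bar a_{t-1})$ strictly between $u_1=v_b^-$ and $u_2=v_b^+$, which is consistent, so instead I use it the other way: $u$ and the leaves all live in $\mcm(\bar a_1,\ldots,\bar a_t)$ with common envelopes $u_1,u_2$). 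Since the foliation exhausts $G$ and $\graph(u)\subset G$, the point $(0,u(0))$ lies on $\graph(v_{b_0})$ for a unique $b_0$; then $\graph(u)$ and $\graph(v_{b_0})$ share a point, so by disjointness of distinct leaves they cannot both be leaves unless $u=v_{b_0}$ — but $u$ need not a priori be a leaf, so instead invoke Theorem~\ref{order}: the disjoint union $\mcm(\bar a_1,)\cup\cdots\cup\mcm(\bar a_1,\ldots,\bar a_t)$ is totally ordered, so $u$ is comparable to every $v_b$; the set $\{b: v_b<u\}$ and $\{b:v_b>u\}$ are both open (by continuity of $b\mapsto v_b$ in, say, $C^0_{\loc}$, which is part of the foliation structure) and nonempty (since leaves fill the slab and $u$ is interior), so by connectedness of $\DR$ there is $b_0$ with $v_{b_0}$ neither $<u$ nor $>u$; total order forces $v_{b_0}=u$. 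Then $t(u)=t(v_{b_0})=t$, completing the proof. The main obstacle is Step~1 — pinning the envelopes $(v_b)^\pm$ to exactly $u_1,u_2$ — since it requires combining the limit characterization in Proposition~\ref{4.2}(a) with the total-order and no-intermediate-leaf statements to rule out the envelopes escaping the slab or landing strictly inside it.
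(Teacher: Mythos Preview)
Your proposal has the right skeleton and invokes the right tools, but Step~1 is both overcomplicated and contains a genuine gap, and Step~2 is hand-wavy in a way the paper avoids.

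\textbf{Step 1.} The key point you miss is that $u_1,u_2\in\mcm(\bar a_1,\ldots,\bar a_{t-1})$ are automatically $\bar\Gamma_t$-periodic: by Proposition~\ref{graphbasic}(ii), $T_{\bar k}u_j=u_j$ for every $\bar k\in\bar\Gamma_t$. Hence $u_1=T_{\bar k_i}u_1<T_{\bar k_i}v_b<T_{\bar k_i}u_2=u_2$ for each $\bar k_i$ in the defining sequence of $(v_b)^\pm$, and passing to the limit gives $u_1\le (v_b)^\pm\le u_2$ in one line. Your route instead tries to bound $w^-$ from both sides separately; the claim ``$w^-\le u_1$ because $v_b>w^-$ forces $w^-$ to sit below the whole slab, as every leaf of the foliation lies above $w^-$'' is not justified: knowing $v_b>w^-$ for \emph{one} $b$ does not show all leaves $v_{b'}>w^-$. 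Once $u_1\le w\le u_2$ is in hand, the paper's finish is short: $v_b<w$ gives $u_1<w$, and if $w\neq u_2$ then Theorem~\ref{order} forces $u_1<w<u_2$, so the foliation produces $b_1$ with $v_{b_1}(0)=w(0)$; but $w\in\mcm(\bar a_1,\ldots,\bar a_{t-1})$ and $v_{b_1}\in\mcm(\bar a_1,\ldots,\bar a_t)$ are distinct and touch, contradicting Theorem~\ref{order}. No separate ``$w^-\le u_1$'' step is needed, and Theorem~\ref{Morse} is not used here.

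\textbf{Step 2.} Your argument ``if $t(u)=1$ then $T_{\bar k}u=u$ for all $\bar k\in\bar\Gamma_2$, contradicting that $u_1,u_2$ pin the slab'' does not produce a contradiction as written. The paper's argument is again a one-liner built on Part~(I): since $(v_b)^-=u_1$ and $(v_b)^+=u_2$, Proposition~\ref{4.2}(b) gives $T_{\bar k}u_1\ge u_2$ for $\bar k\in\bar\Gamma_s$ with $\bar k\cdot\bar a_s>0$, $1\le s<t$; then $T_{\bar k}u>T_{\bar k}u_1\ge u_2>u$, and Proposition~\ref{graphbasic} forces $\bar a_i(u)=\bar a_i$ for $i<t$ and $t(u)\ge t$. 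You mention Proposition~\ref{4.2}(b) in passing, so you are close, but the inductive sketch you wrote around it is not the mechanism.

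\textbf{Step 3.} Your connectedness argument works in principle but relies on continuity of $b\mapsto v_b$, which the foliation definition in the paper does not include. The paper instead uses the foliation only to produce a single $b_0$ with $v_{b_0}(0)=u(0)$, and then Theorem~\ref{order} immediately gives $u=v_{b_0}$; this is simpler and avoids the extra hypothesis.
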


In a verbose mode, Theorem \ref{ba_dg} says the following: take an admissible
set of invariants $(\bar a_1, \ldots, \bar a_t)$ and two minimal solutions
$u_1<u_2$ without self-intersections with the above invariants except for the
last one.  Suppose that the space in between $u_1$ and $u_2$ is foliated by
minimizers $v_b$ which have all invariants $(\bar a_1, \ldots, \bar a_t)$.
%%  and whose profiles at $-\infty$
%% (respectively, at $+\infty$) in 
%% direction
%% $\bar a_t$ all agree with $u_1$ 
%% (respectively, $u_2$)
Then, any minimal solution
without self-intersections lying between $u_1$ and $u_2$ and possesing
the right
last invariant must agree with one of the $v_b$'s.  \bigskip\bigskip

\label{fig2}
\vbox{
\centerline{\epsfxsize=4truein \epsfbox{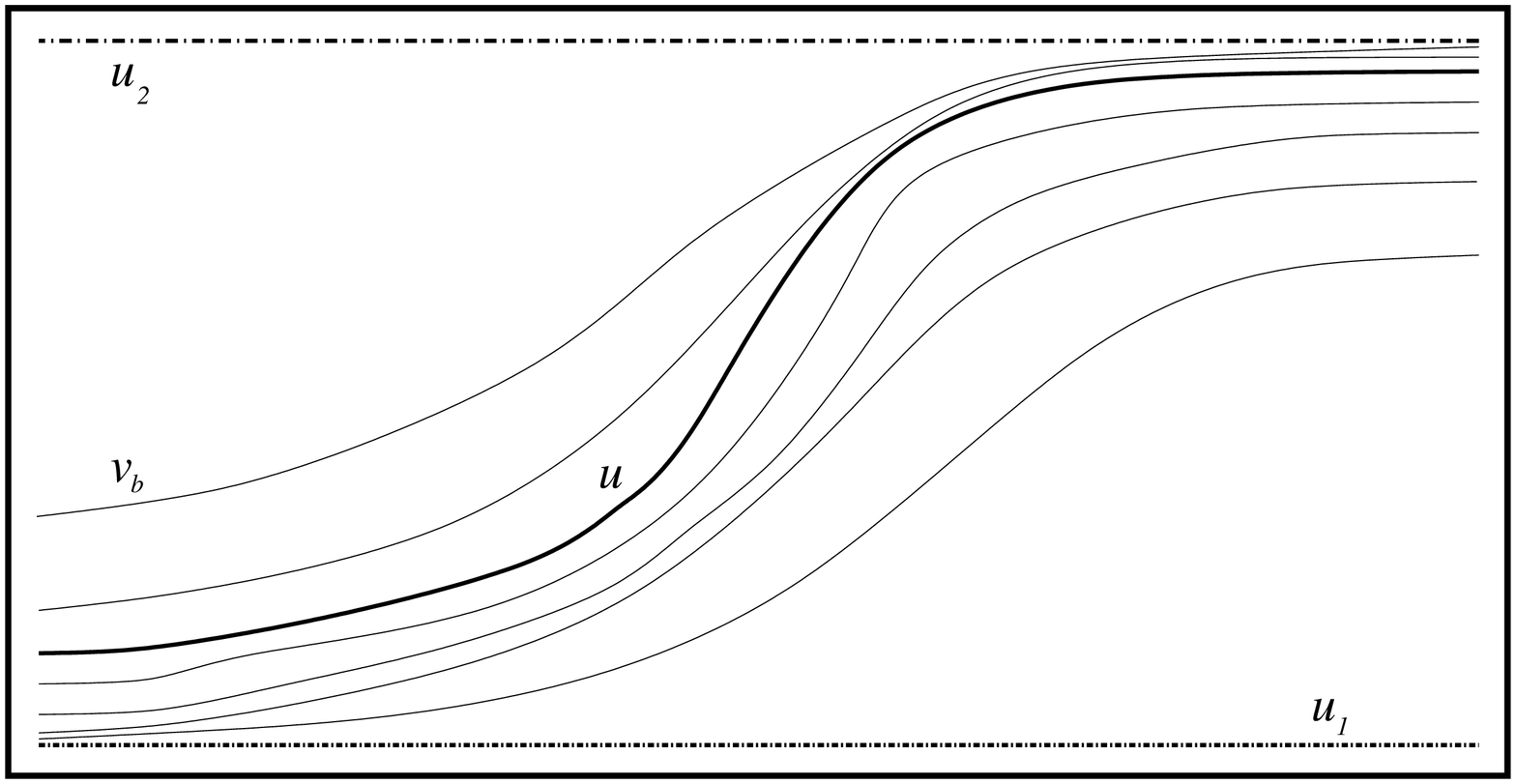}}
\bigskip

\nopagebreak

\centerline{\footnotesize\bf 
The foliation of Theorem \ref{ba_dg}.
}\bigskip\bigskip}

The following results relate Problems {\bf \cite{DG}$_{MIN}$}
and {\bf \cite{Ba89}}
in the case of minimal solutions without
self-intersections of phase transition models:

\begin{corollary}\label{ba_dg-1}
  Let $u\in C^2(\DR^n,(0,1))$ be a minimal solution of \eqref{eq1}.  Suppose
  that $T_{\bar k} u$ is either $>$, $<$ or $=u$ for any $\bar k \in\DZ^{n+1}$.
  Then all the level sets of $u$ are hyperplanes.
\end{corollary}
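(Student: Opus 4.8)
The plan is to recast the phase--transition functional as a $\DZ^{n+1}$--periodic integrand in the sense of \S\ref{Ba:set} and then apply Theorem~\ref{ba_dg} twice, each time with an explicit foliation by one--dimensional layers.

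\emph{Reduction to the Moser--Bangert setting.} Let $\widetilde W$ be the $1$--periodic extension of $W\vert_{[0,1]}$. Since $W(0)=W(1)=0$, $W'(0)=W'(1)=0$ and $W''(0)=W''(1)=2$, one gets $\widetilde W\in C^{2,1}(\DR)$, so $F(x,u,p):=|p|^2+\widetilde W(u)$ is a $\DZ^{n+1}$--periodic integrand of class $C^{2,\epsilon}$ satisfying \eqref{Ba88et}. The given $u$ is then also a minimizer of $\int F(x,u,u_x)\,dx$: for $\phi\in C^\infty_0(B)$ put $\bar u:=\max(0,\min(u+\phi,1))$, so that $\bar u-u$ is Lipschitz with compact support; truncation does not increase the Dirichlet term, and $\widetilde W(\bar u)\le\widetilde W(u+\phi)$ because $\widetilde W\ge0$ vanishes at $0$ and $1$; testing \eqref{eq2} (which extends to Lipschitz variations) against $\bar u-u$ and using $\widetilde W=W$ on $[0,1]$ together with $0<u<1$ gives $\int_BF(x,u,u_x)\le\int_BF(x,\bar u,\bar u_x)\le\int_BF(x,u+\phi,u_x+\phi_x)$. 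Since $u$ is moreover without self-intersections, $u\in\mcm$; and since $0<u<1$, the bound \eqref{AAJ} forces $\rho=0$, hence $\bar a_1(u)=\bar e_{n+1}$.

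\emph{The foliations.} Let $g(t)=(1+e^{-t})^{-1}$, the increasing heteroclinic with $g''=\tfrac12W'(g)$, $g(-\infty)=0$, $g(+\infty)=1$, $g(\DR)=(0,1)$. For a unit vector $c\in\DR^n$ and $b\in\DR$ set $w^c_b(x):=g(c\cdot x+b)$. Then $w^c_b$ solves \eqref{eq1}, the family $(w^c_b)_{b\in\DR}$ foliates $G_0:=\{(x,x_{n+1}):0<x_{n+1}<1\}$, and by the classical calibration argument for phase transitions (a foliation by solutions provides a calibration) each $w^c_b$ is a minimizer of $\int|u_x|^2+W(u)$, hence, by the truncation argument above, of $\int F(x,u,u_x)\,dx$. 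Moreover
\[
(T_{\bar k}w^c_b)(x)-w^c_b(x)=g(c\cdot x-c\cdot k+b)-g(c\cdot x+b)+k_{n+1}\qquad(\bar k\in\DZ^{n+1}),
\]
which, using only monotonicity of $g$ and $g(\DR)=(0,1)$, has constant sign: it lies in $(k_{n+1}-1,k_{n+1}+1)$ and tends to $k_{n+1}$ as $|c\cdot x|\to\infty$, so it is positive for $k_{n+1}\ge1$ and negative for $k_{n+1}\le-1$, while for $k_{n+1}=0$ it has the sign of $-c\cdot k$ (and vanishes when $c\cdot k=0$). Hence $w^c_b$ is without self-intersections, and comparing with Proposition~\ref{graphbasic} one reads off $w^c_b\in\mcm(\bar e_{n+1},(-c,0))$ with $t=2$. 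Finally the constants $0$ and $1$ lie in $\mcm(\bar e_{n+1})$ with $t=1$, and $(\bar e_{n+1},(-c,0))$ is admissible for every unit $c\in\DR^n$.

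\emph{Conclusion via Theorem~\ref{ba_dg}.} Apply Theorem~\ref{ba_dg} with $u_1\equiv0<u_2\equiv1$, $t=2$, $G=G_0$. Using first the foliation $(w^{c_*}_b)_{b\in\DR}$ for an arbitrary fixed unit vector $c_*$, part~(\ref{II}) gives that our $u$ satisfies $t(u)\ge2$ and $\bar a_1(u)=\bar e_{n+1}$; thus $\bar a_2(u)$ is defined, write it as $(c_0,0)$ with $|c_0|=1$. Now apply Theorem~\ref{ba_dg} again with the foliation $(w^{-c_0}_b)_{b\in\DR}\subset\mcm(\bar e_{n+1},(c_0,0))$ (note $\bar a_2(w^{-c_0}_b)=(c_0,0)=\bar a_2(u)$): since now $\bar a_2(u)=\bar a_2$, part~(\ref{II}) yields $b_0\in\DR$ with $u=w^{-c_0}_{b_0}$, i.e.\ $u(x)=g(-c_0\cdot x+b_0)$. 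As $g$ is a strictly increasing bijection $\DR\to(0,1)$, for $\lambda\in(0,1)$ the level set $\{u=\lambda\}$ equals the hyperplane $\{x:c_0\cdot x=b_0-g^{-1}(\lambda)\}$, and it is empty for $\lambda\notin(0,1)$; this is the assertion of the corollary.

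The genuine work is confined to the preparatory steps, not to the (essentially black--box) use of Theorem~\ref{ba_dg}: one must check that the Allen--Cahn functional really embeds into the \emph{periodic} framework of \S\ref{Ba:set} (regularity of $\widetilde W$ and the truncation argument), and --- the more delicate point --- that the translated layers $w^c_b$ for an \emph{arbitrary}, possibly rationally independent, direction $c$ are honest non-self-intersecting minimizers with Bangert invariants exactly $(\bar e_{n+1},(-c,0))$ and $t=2$. This is precisely where it is used that $\bar a_1=\bar e_{n+1}$ is rationally dependent, so that $\lspan\bar\Gamma_2=\DR^n\times\{0\}$ contains every unit vector that could arise as the second invariant of a solution lying between $0$ and $1$.
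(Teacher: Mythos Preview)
Your proof is correct and follows essentially the same route as the paper's: periodically extend $W$, take $u_1\equiv0$, $u_2\equiv1$, foliate $G=\DR^n\times(0,1)$ by the one--dimensional heteroclinic layers $g(c\cdot x+b)$, and apply Theorem~\ref{ba_dg}\,(\ref{II}) first to force $t(u)\ge2$ and then with $c$ matched to $\bar a_2(u)$ to conclude $u$ equals one of the layers. You spell out more of the verifications (the explicit heteroclinic, the truncation argument showing $u$ minimizes the periodic functional, and the computation of the Bangert invariants of $w^c_b$) than the paper does, but the architecture is the same.
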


We denote by $\lfloor r\rfloor$ the integer part of $r\in\DR$.
We then extend the potential of \eqref{eq2}
into a (reasonably smooth) periodic one, in order to connect
the setting in \S\ref{DG:set}
with the one in \S\ref{Ba:set}.

\begin{corollary}\label{ba_dg-2}
  If Problem {\em \bf \cite{Ba89}} has a positive answer in dimension $n$ for
  $F=|p|^2+W(\lfloor u\rfloor)$ and $\bar a_1=\bar e_{n+1}$, then Problem
  {\em \bf \cite{DG}$_{MIN}$} has a positive answer in dimension $n$.
\end{corollary}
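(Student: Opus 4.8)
The plan is to feed a minimal solution of \eqref{eq1} valued in $(0,1)$ into the assumed positive answer of Problem~{\bf \cite{Ba89}}, thereby learning that it is without self-intersections, and then to quote Corollary~\ref{ba_dg-1}. The only real work is to recognize such a $u$ as a Moser--Bangert minimizer of the periodic integrand in the statement. Write $\tilde W$ for the $1$-periodic function on $\DR$ that coincides with $W$ on $[0,1]$ -- this is the potential $W(\lfloor u\rfloor)$, or, more transparently, $W(u-\lfloor u\rfloor)$, announced before the statement -- and set $F(x,u,p)=|p|^2+\tilde W(u)$.

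First I would check that $F$ fits the framework of \S\ref{Ba:set}. It is $\DZ^{n+1}$-periodic, being $x$-independent and $1$-periodic in $u$. Since $W(s)=s^2(1-s)^2$ satisfies $W(0)=W(1)=0$, $W'(0)=W'(1)=0$ and $W''(0)=W''(1)=2$, its $1$-periodic continuation $\tilde W$ is of class $C^2$ with $\tilde W''$ globally Lipschitz, hence $\tilde W\in C^{2,\epsilon}(\DR)$ for every $\epsilon\in(0,1]$. The growth conditions \eqref{Ba88et} hold trivially: $F_{p_i p_j}=2\delta_{ij}$, the mixed derivatives $F_{pu},F_{px},F_{ux},F_{xx}$ all vanish, and $F_{uu}=\tilde W''(u)$ is bounded.

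The substantive step is to show that $u$ obeys \eqref{eq:mos_min} for this $F$. Fix a ball $B$ and $\phi\in C^\infty_0(B)$; put $v=u+\phi$ and let $\bar v=\min\{1,\max\{0,v\}\}$ be its truncation to $[0,1]$. Since $\phi$ vanishes near $\partial B$ and $0<u<1$, the function $\bar v$ coincides with $u$ near $\partial B$, hence is an admissible compactly supported Sobolev perturbation of $u$. Truncation increases neither term of the energy: $|\bar v_x|\le|v_x|$ a.e.\ by the chain rule for truncations, and $\tilde W(\bar v)\le\tilde W(v)$ because $\tilde W\ge0$ with $\tilde W(0)=\tilde W(1)=0$. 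Moreover $\bar v$ and $u$ take values in $[0,1]$, where $\tilde W=W$. Hence, using in the middle the minimality \eqref{eq2} of $u$ for the polynomial $W$-energy -- which passes from $C^\infty_0$ perturbations to compactly supported Sobolev competitors by the usual truncation-and-density argument -- we obtain
\begin{align*}
\int_B|u_x|^2+\tilde W(u)&=\int_B|u_x|^2+W(u)\le\int_B|\bar v_x|^2+W(\bar v)\\
&=\int_B|\bar v_x|^2+\tilde W(\bar v)\le\int_B|v_x|^2+\tilde W(v),
\end{align*}
which is exactly \eqref{eq:mos_min}. So $u$ is a minimal solution in the sense of \S\ref{Ba:set}.

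Finally, since $0<u<1$ everywhere, $|u(x)-u(0)-0\cdot x|\le1$ for all $x\in\DR^n$, so $u$ satisfies the hypothesis of Problem~{\bf \cite{Ba89}} with $\rho=0$ and $C=1$; moreover $\rho=0$ is the only admissible choice, since no nonzero $\rho$ keeps $u(x)-u(0)-\rho\cdot x$ bounded while $u$ is bounded. Thus we are precisely in the case $\bar a_1=(-\rho,1)/\sqrt{|\rho|^2+1}=\bar e_{n+1}$ to which the hypothesis of the corollary refers. Invoking the assumed positive answer to Problem~{\bf \cite{Ba89}} for $F$ and $\bar a_1=\bar e_{n+1}$, we conclude that $u$ is without self-intersections, i.e.\ $T_{\bar k}u$ is $>$, $<$ or $=u$ for every $\bar k\in\DZ^{n+1}$. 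Corollary~\ref{ba_dg-1} then applies verbatim and shows that all level sets of $u$ are hyperplanes, which is the assertion of Problem~{\bf \cite{DG}$_{MIN}$}. The delicate point is the bridging step (third paragraph): identifying the non-periodic Allen--Cahn minimizer with the periodic Moser--Bangert integrand, which rests on the truncation inequality together with the fact that the periodized potential $\tilde W$ agrees with the original $W$ on the range $[0,1]$ of $u$ and of its truncated competitors; everything else is routine verification of the structural hypotheses and a direct appeal to Corollary~\ref{ba_dg-1}.
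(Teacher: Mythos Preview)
Your argument is correct and follows exactly the route of the paper's proof---use the assumed positive answer to Problem~{\bf \cite{Ba89}} to deduce that $u$ is without self-intersections, then invoke Corollary~\ref{ba_dg-1}. You supply more detail than the paper does, most notably the truncation argument showing that a minimizer in the sense of \eqref{eq2} is also a minimizer in the sense of \eqref{eq:mos_min} for the periodized integrand; the paper leaves this (and the structural verifications) implicit.
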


We point out that while the setting in
\S\ref{Ba:set} only has discrete translational invariance, the one in
\S\ref{DG:set} possesses full translational and rotational invariance:
thus, in concrete 
cases, other phase transition models
may be reduced to the setting in \S\ref{Ba:set} after appropriate
rotations and scalings.

In the De Giorgi framework, the analysis of
the profile at infinity has often played a central r\^ole
(see, e.g., \cite{AC, S, VSS}). Theorem \ref{ba_dg}
also gives some information about such asymptotic profile,
according to the following result:

\begin{corollary}\label{asy}
  Suppose that $u_1, u_2 \in \mcm(\bar a_1)$, $u_1<u_2$, that for $\bar
  \Gamma_2= \DZ^{n+1}\cap \lspan\{\bar a_1\}^\bot$ we have $\dim \big(\lspan
  \bar \Gamma_2\big)= n$ and that, for any admissible pair $(\bar a_1, \bar
  a_2)$,
  \begin{equation*}
    G = \Big\{(x,x_{n+1})\mid u_1(x) < x_{n+1} < u_2(x)\Big\}
  \end{equation*} 
  is foliated by $(v_{\bar a_2,b})_{b\in \DR} \subset \mcm(\bar a_1,\bar a_2)$.
  Suppose that $u \in C^2(\DR^n)$ (possibly with self-intersections) is
  minimizing in the sense of \eqref{eq:mos_min} and $u_1<u<u_2$. Then there
  exist $w\in \mcm$ and a sequence $\bar k_i \in \bar \Gamma_2$ such that
  $T_{\bar k_i} u \to w$ in $C^1_{\loc}$.  Furthermore we have either $w \equiv
  u_1$ or $w \equiv u_2$ or $w \equiv v_{\bar a_2,b}$ for some $\bar a_2$ with
  $(\bar a_1, \bar a_2)$ admissible and $b \in \DR$.
\end{corollary}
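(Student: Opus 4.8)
The plan is to obtain this from Theorem~\ref{ba_dg} together with the compactness and ordering theory of Moser and Bangert.

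First I would record two preliminary facts. Since $u_1,u_2\in\mcm(\bar a_1)$ one has $t(u_1)=t(u_2)=1$, so $\bar\Gamma_{t(u_i)+1}(u_i)=\DZ^{n+1}\cap(\lspan\bar a_1)^\bot=\bar\Gamma_2$; by Proposition~\ref{graphbasic}(ii) this means $u_1$ and $u_2$ are fixed by $T_{\bar k}$ for every $\bar k\in\bar\Gamma_2$. Hence, for $\bar k\in\bar\Gamma_2$, the translate $T_{\bar k}u$ is again a minimizer of~\eqref{eq:mos_min} (since $F$ is $\DZ^{n+1}$-periodic) with $u_1=T_{\bar k}u_1<T_{\bar k}u<T_{\bar k}u_2=u_2$. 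Being minimizers confined between $u_1$ and $u_2$, the functions $\{T_{\bar k}u\mid\bar k\in\bar\Gamma_2\}$ obey a uniform $C^{2,\epsilon}_{\loc}$ bound by the interior regularity recalled in~\S\ref{Ba:set}; thus the family is precompact in $C^1_{\loc}(\DR^n)$, and every subsequential limit $w$ is a minimizer with $u_1\le w\le u_2$, solving the same Euler--Lagrange equation as $u_1$ and $u_2$.

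The crux is to choose the sequence $\bar k_i$ so that the limit lies in $\mcm$. I would fix any $\bar a_2$ making $(\bar a_1,\bar a_2)$ admissible (possible as $\dim\lspan\bar\Gamma_2=n\ge1$) and take the corresponding foliation $(v_{\bar a_2,b})_{b\in\DR}$ of $G$, parametrized so that $b\mapsto v_{\bar a_2,b}$ is increasing, with $v_{\bar a_2,b}\to u_1$ as $b\to-\infty$ and $v_{\bar a_2,b}\to u_2$ as $b\to+\infty$. For $\bar k\in\bar\Gamma_2$, each $T_{\bar k}v_{\bar a_2,b}$ belongs to $\mcm(\bar a_1,\bar a_2)$ (the invariants are preserved by the $T$-action) and has graph in $T_{\bar k}G=G$; since $\mcm(\bar a_1)\cup\mcm(\bar a_1,\bar a_2)$ is totally ordered by Theorem~\ref{order}, $T_{\bar k}v_{\bar a_2,b}$ is comparable to every leaf, hence equals one of them. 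Thus $T_{\bar k}$ permutes the leaves and acts on the parameter by an increasing homeomorphism $\tau_{\bar k}$ of $\DR$, and $\bar k\mapsto\tau_{\bar k}$ is an action of $\bar\Gamma_2\cong\DZ^n$; furthermore, Proposition~\ref{4.2}(a) together with part~(\ref{I}) of Theorem~\ref{ba_dg} (which gives $(v_{\bar a_2,b})^-=u_1$ and $(v_{\bar a_2,b})^+=u_2$) shows $\tau_{\bar k_i}(b)\to+\infty$ (respectively $-\infty$) whenever $\bar k_i\cdot\bar a_2\to+\infty$ (respectively $-\infty$), uniformly for $b$ in a compact interval. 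Writing $u(x)=v_{\bar a_2,\beta(x)}(x)$ for the continuous foliation coordinate $\beta$ of $u$, one has $(T_{\bar k}u)(x)=v_{\bar a_2,\tau_{\bar k}(\beta(x-k))}(x)$. If $\beta$ is bounded below, then for $\bar k_i\in\bar\Gamma_2$ with $\bar k_i\cdot\bar a_2\to+\infty$ the monotonicity of $\tau_{\bar k_i}$ forces $T_{\bar k_i}u\to u_2$, and I set $w:=u_2\in\mcm$; symmetrically $w:=u_1$ when $\beta$ is bounded above. In the remaining case, where $u$ comes arbitrarily close to both $u_1$ and $u_2$, I would recenter along points $x_j\to\infty$ where $\beta$ is nearly extremal and extract a subsequential limit; by the same monotonicity, the strong maximum principle (a limit touching a leaf coincides with it), and Bangert's ordering and gap theorems~\ref{order}--\ref{Morse} --- or, alternatively, the classical recurrence mechanism for minimizers --- this limit is forced to be $u_1$, $u_2$, or a leaf $v_{\bar a_2,b}$, in every case an element of $\mcm$. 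I expect this last case to be the main obstacle.

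Finally, suppose $w\in\mcm$ with $u_1\le w\le u_2$ has been produced as a $C^1_{\loc}$-limit of $(T_{\bar k_i}u)_i$, $\bar k_i\in\bar\Gamma_2$. If $w\equiv u_1$ or $w\equiv u_2$ we are done. Otherwise $u_1<w<u_2$: indeed $u_2-w$ and $w-u_1$ are nonnegative and solve linear uniformly elliptic equations obtained by subtracting and linearizing the Euler--Lagrange equations, so a zero would give $w\equiv u_2$ or $w\equiv u_1$ by the strong maximum principle. I would then apply Theorem~\ref{ba_dg}(\ref{II}) with $t=2$, whose hypotheses hold for any admissible pair $(\bar a_1,\bar a_2)$ (its foliation being part of the assumption) together with $u_1,u_2\in\mcm(\bar a_1)$ and $u_1<u_2$: this yields $t(w)\ge2$ and $\bar a_1(w)=\bar a_1$. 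Choosing $\bar a_2:=\bar a_2(w)$ --- admissible with $\bar a_1$ by Proposition~\ref{graphbasic} --- and using the corresponding foliation, the final clause of Theorem~\ref{ba_dg}(\ref{II}), applicable since $\bar a_2(w)=\bar a_2$, gives $w=v_{\bar a_2,b_0}$ for some $b_0\in\DR$. This yields the asserted trichotomy.
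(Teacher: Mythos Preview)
Your final paragraph --- passing from a limit $w\in\mcm$ with $u_1<w<u_2$ to the identification $w=v_{\bar a_2(w),b_0}$ via Theorem~\ref{ba_dg}(\ref{II}) --- is exactly the paper's argument, and your use of the strong maximum principle to upgrade $u_1\le w\le u_2$, $w\not\equiv u_1,u_2$, to strict inequalities is a point the paper leaves implicit.

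The discrepancy is in how you produce $w\in\mcm$. The paper does not argue through the foliation coordinate at all: after recording $\sup|u_x|<\infty$ from Moser's a~priori estimates, it simply invokes \cite[Theorem~(8.1)]{Ba89}, which is precisely the recurrence statement that \emph{any} minimizer with bounded gradient admits a sequence $\bar k_i\in\bar\Gamma_2$ with $T_{\bar k_i}u\to w\in\mcm$ in $C^1_{\loc}$. Your bounded-$\beta$ cases are a nice self-contained substitute --- sandwiching $T_{\bar k_i}u$ between $T_{\bar k_i}v_{\bar a_2,M}$ and $u_2$ (resp.\ $u_1$) and using Proposition~\ref{4.2}(a) with Theorem~\ref{ba_dg}(\ref{I}) indeed forces convergence to $u_2$ (resp.\ $u_1$). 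But your ``remaining case'' (where $\beta$ is unbounded in both directions) is genuinely incomplete: recentering at points $x_j$ where $\beta$ is extremal is not obviously compatible with translating only by elements of $\bar\Gamma_2\subset\DZ^{n+1}$, and even after extracting a limit you have no mechanism that forces it to be \emph{without self-intersections} short of reproving Bangert's recurrence lemma. You yourself flag this as ``the main obstacle'' and gesture toward ``the classical recurrence mechanism'' --- that mechanism \emph{is} \cite[Theorem~(8.1)]{Ba89}, and citing it (as the paper does) replaces your entire middle paragraph.
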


% \begin{corollary}\label{asy}
% Suppose that
% $u \in C^2(\DR^n,(0,1))$ is a minimal solution (possibly with
% self-intersections) of \eqref{eq1}.
% Then, there exist $w\in \mcm$ and a sequence $\bar k_i \in \DZ^n \times
% \{0\}$ such that $T_{\bar k_i} u \to w$ in $C^1_{\loc}$.

% Furthermore, one of the following alternatives holds:
% \begin{itemize}
% \item $w(x)=0$ for any $x\in\DR^n$,
% \item $w(x)=1$ for any $x\in \DR^n$,
% \item $w(x)=u_0(\omega\cdot x-b)$ for any $x\in \DR^n$, for suitable $\omega
% \in S^{n-1}$ and $b\in \DR$, where
% $u_0(t)$ is the solution of the ODE
% $$\ddot u_0=
% u_0-3u_0^2 +2u_0^3$$ with $u_0(0)=1/2$,
% $$\lim_{t\rightarrow -\infty}u_0(t)=0 \;{\mbox{ and }}\;
% \lim_{t\rightarrow +\infty}u_0(t)=1\,.$$
% \end{itemize}\end{corollary}

Notice that Corollary \ref{asy} implies some kind of limit property for minimal
solutions of \eqref{eq1} (with possible self-intersections), in the sense that
it is always possible to find directions in which $u$ suitably approaches a
``pure phase'' $0$ or $1$: 
\begin{corollary}\label{asy2}
  Let $u\in C^2(\DR^n,(0,1))$ be a minimal solution of \eqref{eq1}, possibly
  with self-intersections. Then there exist a sequence $\bar k_i \in \DR^n
\times \{0\}$ and $\omega \in S^{n-1}$ in such a way that
$$ \lim_{t\rightarrow +\infty}
\lim_{i\rightarrow+\infty} u(\omega t-k_i) \in \{0,\,1\}\,.$$
\end{corollary}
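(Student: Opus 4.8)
The plan is to argue directly from the minimality property \eqref{eq2}, without invoking the earlier Moser--Bangert material; the only delicate point is to rule out minimal solutions that stay uniformly away from both phases.

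First I would establish the dichotomy $\inf_{\DR^n}u=0$ or $\sup_{\DR^n}u=1$. If this failed there would be $\epsilon\in(0,\tfrac12]$ with $\epsilon\le u\le 1-\epsilon$ on all of $\DR^n$, hence $W(u)\ge\delta:=\min_{[\epsilon,1-\epsilon]}W>0$ everywhere. I would then test \eqref{eq2} on a large ball $B_R$ against the variation $\phi:=-\eta_R u$, where $\eta_R\in C^\infty_0(B_R)$, $\eta_R\equiv1$ on $B_{R-1}$, $|\nabla\eta_R|\le2$ (legitimate after a routine mollification, \eqref{eq2} being stable under bounded $W^{1,2}_0$-variations). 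The resulting competitor $w=(1-\eta_R)u$ vanishes on $B_{R-1}$ and coincides with $u$ off $B_R$; using $W(0)=0$ together with the uniform bound $\|\nabla u\|_\infty<\infty$ from interior elliptic estimates, $\int_{B_R}|w_x|^2+W(w)\,dx\le C\,|B_R\setminus B_{R-1}|\le C'R^{\,n-1}$, whereas $\int_{B_R}|u_x|^2+W(u)\,dx\ge\delta\,|B_R|\ge c\,R^{\,n}$; for $R$ large this contradicts \eqref{eq2}. This is the only place where minimality, rather than merely solving \eqref{eq1}, is used.

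Since $W(1-s)=W(s)$, the function $1-u$ is again a minimal solution of \eqref{eq1} (with the same energy), so I may assume $\inf_{\DR^n}u=0$. I would then pick $y_j\in\DR^n$ with $u(y_j)\to0$, set $\bar k_j:=(-y_j,0)\in\DR^n\times\{0\}$ so that $T_{\bar k_j}u=u(\cdot+y_j)$, and use the uniform $C^{2,\epsilon}$ bounds for bounded minimizers to extract a subsequence along which $u(\cdot+y_j)\to w$ in $C^2_{\loc}$, with $w$ a solution of \eqref{eq1}, $0\le w\le1$ and $w(0)=0$. As $W'(0)=0$, one can write $\tfrac12W'(w)=c(x)w$ with $c\in L^\infty$, so $w\ge0$ is a supersolution of the operator $\Delta-c^+$ attaining the value $0$ at an interior point; the strong maximum principle forces $w\equiv0$.

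Finally, for any fixed $\omega\in S^{n-1}$ and any $t>0$ one gets $\lim_{j}u(\omega t-k_j)=w(\omega t)=0$ along the chosen subsequence, hence $\lim_{t\to+\infty}\lim_{j\to+\infty}u(\omega t-k_j)=0\in\{0,1\}$ (and had one instead reduced to $\sup u=1$, the same argument on $1-u$ gives the limit $1$). The main obstacle is the energy competitor of the first paragraph, which excludes the ``trapped'' minimal solutions; everything after it is standard compactness plus the strong maximum principle. One can alternatively deduce the statement by feeding $u_1\equiv0$, $u_2\equiv1$ into Corollary~\ref{asy} and unwinding its trichotomy together with Propositions~\ref{graphbasic} and~\ref{4.2} (the family $v_{\bar a_2,b}$ being squeezed to a pure phase by translating along a suitable rational ray in the direction of the horizontal part of $\bar a_2$, using the $\bar\Gamma_2$-monotonicity of Proposition~\ref{graphbasic}), but the direct route above has the advantage of not requiring the foliation hypothesis of Corollary~\ref{asy}.
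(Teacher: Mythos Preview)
Your argument is correct, and it is genuinely different from the paper's. The paper proves Corollary~\ref{asy2} in a single line by verifying that $u_1\equiv0$, $u_2\equiv1$, and the heteroclinic foliations $v_{\omega,b}(x)=u_0(\omega\cdot x-b)$ satisfy the hypotheses of Corollary~\ref{asy}; the trichotomy there (limit equal to $u_1$, $u_2$, or some $v_{\bar a_2,b}$) then immediately yields the double limit in $\{0,1\}$ after evaluating along the ray $x=\omega t$. That route therefore passes through the full Moser--Bangert machinery (Theorem~\ref{ba_dg}, Theorem~\ref{order}, Proposition~\ref{4.2}, and \cite[Theorem~(8.1)]{Ba89}). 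Your proof instead uses only an energy competitor to rule out solutions trapped in $[\epsilon,1-\epsilon]$, elliptic compactness for the translates, and the strong maximum principle to force the limit $w\equiv0$ (or $\equiv1$); in particular you never need the non-self-intersection theory or the foliation hypothesis, and you even get the stronger conclusion that the inner limit $\lim_i u(\omega t-k_i)$ is already identically $0$ (or $1$) for \emph{every} $\omega\in S^{n-1}$. The advantage of the paper's route is that it illustrates Corollary~\ref{asy} in action and keeps the translations in $\DZ^n\times\{0\}$; the advantage of yours is that it is self-contained and makes transparent which features of the problem (minimality, $W(0)=W(1)=0$, the strong maximum principle at the wells) are actually doing the work.
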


\section{Proofs}

\begin{proof}[Proof of Theorem \ref{ba_dg}]
  (I) Let $b \in \DR$ be arbitrary. Without loss of generality we argue only for
  $(v_b)^+ =: w$. By Proposition \ref{4.2} we have $w \in \mcm(\bar a_1, \ldots,
  \bar a_{t-1})$.  Since the translations $T_{\bar k}$ are order preserving and
  $u_1 < v_b < u_2$, we see that $u_1 \le w \le u_2$.  Since furthermore $v_b <
  w$, we obtain $u_1< w$.
  
  We want to show that $w=u_2$.
  Suppose, by contradition, that
$w \neq u_2$.  Then Theorem \ref{order} 
implies $w<u_2$ and thus
  $u_1 < w < u_2$.  By the assumption that $G$ is foliated by $(v_b)_{b\in
    \DR}$, there exists $b_1 \in \DR$ such that $w(0) = v_{b_1}(0)$. Since
  $v_{b_1} \in \mcm(\bar a_1, \ldots, \bar a_t)$ and $w \in \mcm(\bar a_1,
  \ldots, \bar a_{t-1})$, this contradicts Theorem \ref{order}.

  (II) Let $b \in \DR$ be arbitarary. We apply Proposition \ref{4.2} (b) to $u_1
  = (v_b)^-, u_2 = (v_b)^+$ and use the fact that the translations $T_{\bar k}$
  are order preserving to infer that
  $$T_{\bar k}u > T_{\bar k}u_1 \ge u_2>  u$$
  whenever $k \in \bar \Gamma_i$ and $k\cdot \bar a_i > 0$ for $1\le i < t$.  In
  view of Proposition \ref{graphbasic} this implies $t(u) \ge t$ and $\bar
  a_i(u) = \bar a_i$ for $1\le i < t$.
      
  Suppose now that the condition $\bar a_t(u) = \bar a_t$ holds. The assumption
  that $G$ is foliated by $(v_b)_{b\in \DR}$ implies that there exists $b_0 \in
  \DR$ such that $u(0) = v_{b_0}(0)$ and Theorem \ref{order} implies $u=
  v_{b_0}$.
\end{proof}

\begin{proof}[Proof of Corollary \ref{ba_dg-1}]
Let $F:=|p|^2+W(\lfloor u\rfloor)$. It is easily seen that
$F\in C^{2,1}$, that it is periodic in $(x,u)$
and that it satisfies \eqref{Ba88et}, thence the setting
of \S\ref{Ba:set} holds
for such $F$.

Let $u_1(x):=0$ and $u_2(x):=1$.  Both $u_1$ and $u_2$ minimize the energy in
\eqref{eq2}.  They are obviously without self-intersections and so they belong
to $\mcm (\bar e_{n+1})$.

If $u_0(t)$ is
the solution of the ODE \label{ODE}
$$\ddot u_0=
u_0-3u_0^2 +2u_0^3$$ with $u_0(0)=1/2$,
$$\lim_{t\rightarrow -\infty}u_0(t)=0 \;{\mbox{ and }}\;
\lim_{t\rightarrow +\infty}u_0(t)=1\,,$$ then for every $\omega \in S^{n-1}$,
the family $(v_{\omega,b})_{b\in \DR}$ with
\begin{equation}
  \label{vomb}
  v_{\omega,b}(x)=u_0(\omega\cdot x-b)  
\end{equation}
for any $x\in \DR^n$ is an extremal, and consequently minimal foliation of
the set
$$\DR^n\times
(0,1)=\{(x,x_{n+1}) \mid x \in \DR^n, u_1(x)<x_{n+1}<u_2(x)\}\,.$$ A reference
for the fact that extremal foliations are actually minimal foliations is
e.g.~\cite[6.3]{GH1}.  One readily checks that $v_{\omega,b}\in\mcm (\bar
e_{n+1},\bar\omega)$, where $\bar \omega = (\omega,0) \in \DR^n \times \{0\}$.

Let $u$ be as requested by Corollary \ref{ba_dg-1}.
Since $u$ is bounded, $\bar a_1(u)=\bar e_{n+1}$ and so
\begin{equation}\label{AAU}
  \bar \Gamma_2(u) = \DZ^{n+1}\cap
  (\lspan \bar a_1(u))^\bot\subseteq \DR^n\times\{0\}\,
  .\end{equation}
It then follows from
Theorem \ref{ba_dg}\eqref{II} that $t(u) \ge 2$, and in particular $u$ is
non-constant. Furthermore, for $\bar \omega = \bar a_2(u)$ there exists $b_0 \in
\DR$ such that $u = v_{\omega,b_0}$. Thus the level sets of $u$ are hyperplanes.

\end{proof}

\begin{proof}[Proof of Corollary \ref{ba_dg-2}]
Let $u \in C^2(\DR^n,(0,1))$ be a minimal solution of
\eqref{eq1}.
If
Problem {\bf \cite{Ba89}} has a positive answer in dimension $n$
for $F=|p|^2+W(\lfloor u\rfloor)$ 
and $\bar a_1=\bar e_{n+1}$, then $u$
is without self-intersections.

Consequently,
by Corollary \ref{ba_dg-1},
all the level sets of $u$ are hyperplanes, giving
that
Problem {\bf \cite{DG}$_{MIN}$} has a positive 
answer in dimension $n$.
\end{proof}

\begin{proof}[Proof of Corollary \ref{asy}]
  By \cite[Theorem 3.1, Corollary 3.2]{Mos}, we have that $\sup |u_x| < \infty$
  and so, by \cite[Theorem (8.1)]{Ba89}, there exists a sequence $\bar k_i \in
  \bar\Gamma_2$ such that $T_{\bar k_i} u \to w \in \mcm$ in $C^1_{\loc}$.
  Clearly, $u_1\le w\le u_2$.  We may suppose indeed that $u_1<w<u_2$, otherwise
  we get one of the first two alternatives in the statement of Corollary
  \ref{asy}. By Theorem \ref{ba_dg} (II) we have $\bar a_1(w) = \bar a_1$ and
  $t(w) \ge 2$. Furthermore together with the assumptions of Corollary \ref{asy}
  Theorem \ref{ba_dg} (II) implies that $t(w) = 2$ and that for some $b \in \DR$
  we have $w = v_{\bar a_2, b}$, where $\bar a_2 = \bar a_2(w)$.
\end{proof}

\begin{proof}[Proof of Corollary \ref{asy2}]
  With $u_1(x):=0$, $u_2(x):=1$, $v_{\omega,b}$ as in \eqref{vomb} and $G =
  \DR^n \times (0,1)$ the assumptions of Corollary \ref{asy} are satisfied.
\end{proof}

%%% Local Variables: 
%%% mode: latex
%%% TeX-master: "main"
%%% End: 

%----------------- Literaturverzeichnis 
%----------------------------------------
\addcontentsline{toc}{chapter}{Bibliography}
\bibliographystyle{amsalpha}    %Literaturliste im plain Stil 
\bibliography{lit}            %Datenbank der Literaturliste
%------------- FileName: "lit.bib" 

\end{document}